\documentclass[11pt]{amsart}
\usepackage{url,amsmath,amssymb,amsthm,mathtools,enumitem,tikz,graphicx,booktabs,array,comment,bm}
\usetikzlibrary{calc}
\usepackage[top=30mm, bottom=35mm, left=30mm, right=30mm]{geometry}
\usepackage{hyperref}
\usepackage{xcolor}
\usepackage[capitalize,nameinlink,noabbrev,nosort]{cleveref}
\hypersetup{
	colorlinks=true,
	linkcolor=cyan,
	citecolor=cyan,
	filecolor=cyan,
	urlcolor=cyan,
}
\newtheorem{theoremcounter}{Theorem Counter}[section]
\theoremstyle{plain}
\newtheorem{theorem}[theoremcounter]{Theorem}

\newtheorem{lemma}[theoremcounter]{Lemma}
\newtheorem{corollary}[theoremcounter]{Corollary}
\theoremstyle{definition}
\newtheorem{definition}[theoremcounter]{Definition}
\theoremstyle{remark}
\newtheorem{remark}[theoremcounter]{Remark}

\AddToHook{env/theorem/begin}{\crefalias{theoremcounter}{theorem}}
\AddToHook{env/proposition/begin}{\crefalias{theoremcounter}{proposition}}
\AddToHook{env/conjecture/begin}{\crefalias{theoremcounter}{conjecture}}
\AddToHook{env/problem/begin}{\crefalias{theoremcounter}{problem}}
\AddToHook{env/lemma/begin}{\crefalias{theoremcounter}{lemma}}
\AddToHook{env/corollary/begin}{\crefalias{theoremcounter}{corollary}}
\AddToHook{env/definition/begin}{\crefalias{theoremcounter}{definition}}
\AddToHook{env/remark/begin}{\crefalias{theoremcounter}{remark}}
\AddToHook{env/example/begin}{\crefalias{theoremcounter}{example}}

\newcommand{\qq}[1]{\langle#1\rangle}
\newcommand{\h}{\mathfrak{h}}
\newcommand{\Span}{\mathrm{span}}
\newcommand{\wt}{\mathrm{wt}}
\newcommand{\BZ}{\mathrm{BZ}}
\newcommand{\SZ}{\mathrm{SZ}}
\newcommand{\ds}{\diamondsuit}
\newcommand{\bX}{\boldsymbol{X}}

\address{Graduate School of Science and Engineering, Kagoshima University, 1-21-35 Korimoto, \newline Kagoshima, Kagoshima 890-0065, Japan}
\email{hirose@sci.kagoshima-u.ac.jp}
\address{Faculty of Mathematics, Kyushu University, Motooka 744, Nishi-ku, Fukuoka, 819-0395, Japan}
\email{nozaki.takumi.912@s.kyushu-u.ac.jp}
\address{Department of Mathematics, The University of Tokyo, 3-8-1 Komaba, Meguro-ku, Tokyo 153-8914, Japan}
\email{taiki-watanabe@g.ecc.u-tokyo.ac.jp}

\keywords{multiple $q$-zeta values, $q$-MZVs, multiple zeta values, $q$-series, generating functions}

\title{A unified proof of conjectures on the spaces of multiple $q$-zeta values}
\author{Minoru Hirose, Takumi Maesaka and Taiki Watanabe}
\date{}

\begin{document}

\begin{abstract}
We prove two conjectures on the spaces generated by multiple $q$-zeta values.
More precisely, we show that the spaces $\mathcal{Z}_q^{\circ}$ and $\mathcal{Z}_{q,1}^{\circ}$ already generate the larger spaces $\mathcal{Z}_q$ and $\mathcal{Z}_{q,1}$, respectively.
Our result is stronger than the equality of $\mathbb{Q}$-vector spaces: for every generator in the larger spaces, we construct an explicit expression with integer coefficients in terms of the smaller generating families.
We first establish these formulas at the finite level, where suitable finite $q$-analogues admit recursive descriptions through generating series, and then pass to the infinite limit.
\end{abstract}

\maketitle

\section{Introduction}

The purpose of this paper is to prove two conjectures on the spaces of $q$-analogues of multiple zeta values ($q$-MZVs). More precisely, we establish the asserted equalities of $\mathbb{Q}$-vector spaces by giving explicit formulas with integer coefficients that express each relevant $q$-MZV in terms of the smaller families.

For $r\geq 1$, integers $k_1,\dots,k_r\ge 1$, polynomials $Q_1,\dots,Q_{r-1}\in\mathbb{Q}[X]$, and $Q_r\in X\mathbb{Q}[X]$ satisfying $\deg Q_j\le k_j$ for all $j$, define
\[
\zeta_q(k_1,\dots,k_r;Q_1,\dots,Q_r)
:=
\sum_{0<n_1<\cdots<n_r}
\prod_{j=1}^r \frac{Q_j(q^{n_j})}{(1-q^{n_j})^{k_j}}
\in \mathbb{Q}[[q]].
\]
For $r=0$, we set $\zeta_q(\emptyset;\emptyset):=1$. For $d\in\{0,1\}$, we define
\begin{align*}
\mathcal{Z}_{q,d}
&:=
\Span_{\mathbb{Q}}\bigl\{
\zeta_q(k_1,\dots,k_r;Q_1,\dots,Q_r)
\mid r\ge 0,\ k_j\ge 1,\ \deg Q_j\le k_j-d,\ Q_r\in X\mathbb{Q}[X]
\bigr\},\\
\mathcal{Z}_{q,d}^{\circ}
&:=
\Span_{\mathbb{Q}}\bigl\{
\zeta_q(k_1,\dots,k_r;Q_1,\dots,Q_r)
\mid r\ge 0,\ k_j\ge 1,\ \deg Q_j\le k_j-d,\ Q_j\in X\mathbb{Q}[X]
\bigr\}.
\end{align*}
We write $\mathcal{Z}_q:=\mathcal{Z}_{q,0}$ and $\mathcal{Z}_q^{\circ}:=\mathcal{Z}_{q,0}^{\circ}$.

The two conjectures proved in this paper assert the following equalities:
\begin{align*}
\mathcal{Z}_q^{\circ}&=\mathcal{Z}_q,\\
\mathcal{Z}_{q,1}^{\circ}&=\mathcal{Z}_{q,1}.
\end{align*}
The inclusion $\subset$ is immediate from the definitions in both cases. The first conjecture was proposed by Bachmann; the earliest explicit formulation seems to be \cite[Conjecture~4.3]{Bachmann_Bibrackets}, and the notations $\mathcal{Z}_q^{\circ}$ and $\mathcal{Z}_q$ are used in \cite[Conjecture~1]{Brindle_qmzv}. The second conjecture was also suggested by Bachmann and is recorded in \cite[Conjecture~4.1]{Brindle_qduality} and \cite[Figure~1]{Brindle_qmzv}.

To state our results in a form suited to explicit formulas, we introduce the models used throughout the paper.
We define the sets of indices as follows:
\begin{align*}
\mathbb{I} & :=\{(k_{1},\dots,k_{r})\in\mathbb{Z}_{\geq1}^{r}\,\mid\,r\geq0\},\\
\mathbb{I}^{\mathrm{adm}} & :=\{(k_{1},\dots,k_{r})\in\mathbb{I}\,\mid\,r=0\text{ or }k_{r}\neq1\},\\
\bar{\mathbb{I}} & :=\{(k_{1},\dots,k_{r})\in(\{\bar{1}\}\cup\mathbb{Z}_{\geq1})^{r}\,\mid\,r\geq0\},\\
\bar{\mathbb{I}}^{\mathrm{adm}} & :=\{(k_{1},\dots,k_{r})\in\bar{\mathbb{I}}\,\mid\,r=0\text{ or }k_{r}\neq\bar{1}\},
\end{align*}
where $\bar{1}$ is a formal symbol.
We define two models of $q$-MZVs by
\begin{align*}
\zeta_{q}^{\dagger}(\boldsymbol{k}) & :=\sum_{\substack{0<n_{1}\leq\cdots\leq n_{r}\\
n_{i}<n_{i+1}\ (k_{i}\neq\bar{1})
}
}\prod_{\substack{1\leq j\leq r\\
k_{j}\neq\bar{1}
}
}\frac{q^{n_{j}}}{(1-q^{n_{j}})^{k_{j}}}
\qquad(\boldsymbol{k}=(k_{1},\dots,k_{r})\in\bar{\mathbb{I}}^{\mathrm{adm}}),\\
\zeta_{q}^{\BZ}(\boldsymbol{k}) & :=\sum_{0<n_{1}<\cdots<n_{r}}\prod_{j=1}^{r}\frac{q^{n_{j}(k_{j}-1)}}{(1-q^{n_{j}})^{k_{j}}}\qquad(\boldsymbol{k}=(k_{1},\dots,k_{r})\in\mathbb{I}^{\mathrm{adm}}).
\end{align*}
Here, the notation $\zeta_{q}^{\dagger}(\boldsymbol{k})$ is introduced in this paper, and $\zeta_{q}^{\BZ}(\boldsymbol{k})$ is the Bradley--Zhao (BZ) model introduced independently by Bradley \cite{Bradley_qmzv} and Zhao \cite{Zhao_qmzv}.

These models are useful because they describe the spaces appearing in the conjectures; see \cref{lem:spaces_qMZV} below. Namely,
\begin{align*}
\mathcal{Z}_{q} & =\Span_{\mathbb{Q}}\{\zeta_{q}^{\dagger}(\boldsymbol{k})\,|\,\boldsymbol{k}\in\bar{\mathbb{I}}^{\mathrm{adm}}\},\\
\mathcal{Z}_{q}^{\circ} & =\Span_{\mathbb{Q}}\{\zeta_{q}^{\dagger}(\boldsymbol{k})\,|\,\boldsymbol{k}\in\bar{\mathbb{I}}^{\mathrm{adm}},k_{i}\neq\bar{1}\},\\
\mathcal{Z}_{q,1} & =\Span_{\mathbb{Q}}\{\zeta_{q}^{\BZ}(\boldsymbol{k})\,|\,\boldsymbol{k}\in\mathbb{I}^{\mathrm{adm}}\},\\
\mathcal{Z}_{q,1}^{\circ} & =\Span_{\mathbb{Q}}\{\zeta_{q}^{\BZ}(\boldsymbol{k})\,|\,\boldsymbol{k}\in\mathbb{I}^{\mathrm{adm}},k_{i}\neq1\}.
\end{align*}
Thus the conjectures say that every element in the first or third spanning family can be rewritten in terms of the second or fourth family, respectively. Our theorem does this with explicit integer coefficients.

To encode these expansions, we work in the noncommutative polynomial algebra
\[
\h:=\mathbb{Z}\langle x,y\rangle,
\qquad
\h^1:=\mathbb{Z}+y\h,
\qquad
\h^0:=\mathbb{Z}+y\h x,
\qquad
\h^{\ge 2}:=\mathbb{Z}+yx\,\mathbb{Z}\langle x,yx\rangle.
\]
The role of this algebra is to package families of $q$-MZVs into words on which our recursive formulas are stated. Define $\mathbb{Z}$-linear maps
\[
Z_q^{\dagger}:\h^1\to\mathbb{Q}[[q]],
\qquad
Z_q^{\BZ}:\h^0\to\mathbb{Q}[[q]]
\]
by $Z_q^{\dagger}(1):=Z_q^{\BZ}(1):=1$ and
\begin{align*}
Z_q^{\dagger}(yx^{k_1-1}\cdots yx^{k_r-1})&:=\zeta_q^{\dagger}(k_1,\dots,k_r),\\
Z_q^{\BZ}(yx^{k_1-1}\cdots yx^{k_r-1})&:=\zeta_q^{\BZ}(k_1,\dots,k_r).
\end{align*}
We construct explicit words $\mathfrak{E}_q(c_1,\dots,c_{2r})\in\h^1$ and $\mathfrak{D}_q(c_1,\dots,c_{2r})\in\h^{\ge 2}$ recursively and prove the following identities.

\begin{theorem}\label{thm:main_inf}
For $r\ge 0$ and $k_1,\dots,k_r, l_1,\dots,l_r\in\mathbb{Z}_{\ge 1}^r$, we have
\begin{align*}
\zeta_{q}^{\dagger}(\underbrace{\bar{1},\dots,\bar{1}}_{l_{1}-1},k_{1},\dots,\underbrace{\bar{1},\dots,\bar{1}}_{l_{r}-1},k_{r}) & =Z_{q}^{\dagger}\bigl(\mathfrak{E}_{q}(l_{1},k_{1},\dots,l_{r},k_{r})\bigr),\\
\zeta_{q}^{\BZ}(\underbrace{1,\dots,1}_{l_{1}-1},k_{1}+1,\dots,\underbrace{1,\dots,1}_{l_{r}-1},k_{r}+1) & =Z_{q}^{\BZ}\bigl(\mathfrak{D}_{q}(l_{1},k_{1},\dots,l_{r},k_{r})\bigr).
\end{align*}
\end{theorem}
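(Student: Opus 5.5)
The proof goes through finite truncations, as the abstract indicates. For $N\ge 1$, I would define $\zeta_{q,N}^{\dagger}(\boldsymbol{k})$ and $\zeta_{q,N}^{\BZ}(\boldsymbol{k})$ by adding the constraint $n_r\le N$ (equivalently, all $n_j\le N$) to the defining sums. I would then extend these linearly to maps $Z_{q,N}^{\dagger}$ on $\h^1$ and $Z_{q,N}^{\BZ}$ on $\h^0$.

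The key tool is a generating-series encoding of the left-hand sides. Consider
\[
F_N(X_1,Y_1,\dots,X_r,Y_r)=\sum_{l_j,k_j\ge1}\zeta_{q,N}^{\dagger}(\{\bar1\}^{l_1-1},k_1,\dots,\{\bar1\}^{l_r-1},k_r)\prod_j X_j^{l_j-1}Y_j^{k_j-1}.
\]
Summing the inner geometric series, each block $(\{\bar1\}^{l-1},k)$ ending at $n_j=m$, with the $\bar1$-entries in the window $n_{j-1}<\cdot\le m$, contributes a factor
\[
\binom{m-n_{j-1}+l-2}{l-1}\frac{q^m}{(1-q^m)^k}.
\]
Summed over $l$ this becomes a rational factor $(1-X)^{-(m-n_{j-1})}$. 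Summed over $k$ it becomes $q^m/(1-q^m-Y)$.

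From this I would derive a recursion in $N$, or in the depth $r$, of the form "$F_N$ for $r$ blocks equals a sum over the last position of $F$ for $r-1$ blocks times an explicit kernel". The idea is then to rewrite the kernel $(1-X)^{-(m-n)}$ by the substitution $t=q^m$. This expresses it as a combination of the generating kernels for $q^m/(1-q^m)^k$ with non-$\bar1$ entries, which is possible because $1/(1-X)$ and the BZ-type kernel $q^{m(k-1)}/(1-q^m)^k$ are related by a Möbius-type change of variables in $Y$.

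The words $\mathfrak{E}_q$ and $\mathfrak{D}_q$ are defined recursively, presumably so that their generating series in $(c_1,\dots,c_{2r})$ satisfy exactly this recursion. The plan is therefore to show that the generating series of $Z_{q,N}^{\dagger}(\mathfrak{E}_q(\cdots))$ obeys the same recursion with the same initial condition $r=0$, namely $1=1$. This is an induction on $r$: strip off the last block, apply the recursive definition of $\mathfrak{E}_q$, and compare the kernels.

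The BZ identity is handled in the same way. I would use the block factor $\binom{m-n+l-2}{l-1}$-type expansions for runs of $1$'s followed by $k+1$. Here the admissibility of $\mathfrak{D}_q\in\h^{\ge2}$ must be checked: no letter $y$ may be followed directly by $y$, and the word must end in $x$. I expect this to follow from the recursive construction.

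The second stage is the passage $N\to\infty$. Both sides of the finite identity are finite integer combinations of truncated $q$-series, with coefficients independent of $N$. Each truncated series converges $q$-adically to its infinite counterpart, because the terms with $n_r>N$ lie in $q^{N}\mathbb{Q}[[q]]$, and the admissibility conditions ensure the limits exist. Hence the identity passes to the limit.

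The main obstacle is the finite-level matching of kernels. A $\bar1$-run produces binomial weights $\binom{m-n+l-2}{l-1}$, which depend on the gap $m-n$, whereas the right-hand side involves only products of factors $q^m/(1-q^m)^k$. The gap dependence must therefore be absorbed. If the naive kernel comparison fails, I would first try the finite telescoping identity
\[
\sum_{n<a\le m}\frac{q^a}{1-q^a}\cdots,
\]
that is, writing $(1-X)^{-(m-n)}=\prod_{n<a\le m}(1-X)^{-1}$ and inserting a "marker" index at each $a$ to convert the $\bar1$-weights into genuine $q^a$-terms. This is the step where the precise recursive definition of $\mathfrak{E}_q$ must be used.
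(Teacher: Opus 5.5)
There is a genuine gap, and it is in the very first step. With the naive truncation $n_r\le N$, the finite identity you plan to prove by induction is false, so no matching of kernels can close the argument.

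Take $r=1$ and $(l_1,k_1)=(2,1)$. Unwinding \cref{def:Eq} gives $\mathfrak{E}_q(1,1)=y$ and $\mathfrak{E}_q(2,1)=\mathfrak{E}_q(1,1)x=yx$, and similarly $\mathfrak{D}_q(2,1)=yx^2$. So the theorem reads
\[
\sum_{n\ge1}\frac{nq^n}{1-q^n}=\sum_{n\ge1}\frac{q^n}{(1-q^n)^2},\qquad \zeta_q^{\BZ}(1,2)=\zeta_q^{\BZ}(3).
\]
Both hold only after summing to infinity; the first is just the symmetry $n\leftrightarrow m$ in $\sum_{n,m\ge1}nq^{nm}$. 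At $N=1$ your truncations give $q/(1-q)$ against $q/(1-q)^2$, and $0$ against $q^2/(1-q)^3$.

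Structurally, the truncated evaluation $(Z^{\BZ}_{q,N})_N$ is injective (\cref{lem:independence}). A naive-truncation BZ identity holding for all $N$ would therefore force $y^{l_1}x^{k_1}\cdots y^{l_r}x^{k_r}=\mathfrak{D}_q(l_1,k_1,\dots,l_r,k_r)\in\h^{\ge2}$, which is impossible once some $l_j\ge2$. The words $\mathfrak{E}_q,\mathfrak{D}_q$ are tied to one specific finite model, and yours is not it. The gap-dependent weights $\binom{m-n+l-2}{l-1}$ you flag as the main obstacle cannot be absorbed at finite $N$; they are a symptom of this.

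The missing idea is that finite model.
\begin{itemize}
\item On the $\dagger$ side, a $\bar1$ at position $n_j$ carries the factor $1/(1-q^{N-n_j})$, with $n_r<N$.
\item On the BZ side one uses $\zeta^{\ds,\BZ}_{q,N}$. One additionally sums over subsets $A$ of the positions of entries $1$; for $i\in A$ the condition $n_i<n_{i+1}$ is relaxed to $n_i\le n_{i+1}$, and $1/(1-q^{n_i})$ is replaced by $q^{N-n_i}/(1-q^{N-n_i})$.
\end{itemize}
These modifications vanish $q$-adically as $N\to\infty$, because the admissible last factor absorbs the terms with $n_j$ near $N$. So your limiting step survives unchanged.

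At finite $N$, however, these modifications make the generating series satisfy a closed recursion in $N$ (\cref{cor:recur_for_gen}). It relates $G^{\epsilon}_{q,N+1}$ to a signed sum, over partial domino tilings $T\in\mathcal{T}(r)$, of $\widetilde G^{\epsilon}_{q,N}(\bX_{\setminus T})$. The paper proves it through an auxiliary left truncation $M$, by induction on $N-M$.

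Comparing coefficients then gives the forward difference $\xi^{\epsilon}_{q,N+1}(\boldsymbol{c})-\xi^{\epsilon}_{q,N}(\boldsymbol{c})$. The words $\mathfrak{E}^{\epsilon}_q$ are defined exactly so that $\Delta_N Z^{\dagger}_{q,N}(\mathfrak{E}^{\epsilon}_q(\boldsymbol{c}))$ matches it. This uses $\Delta_N Z^{\dagger}_{q,N}(uyx^{k-1})=q^N(1-q^N)^{-k}Z^{\dagger}_{q,N}(u)$ and $\Delta_N Z^{\dagger}_{q,N}(ux^{k})=(1-q^N)^{-k}\Delta_N Z^{\dagger}_{q,N}(u)$.

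This recursion lowers entries of $\boldsymbol{c}$ in arbitrary positions (through $\boldsymbol{c}_{A,B}$) and appends letters on the right. Accordingly, the induction is on the weight $c_1+\cdots+c_{2r}$ combined with a forward difference in $N$. It is not a depth induction that strips off the last block, as you propose.
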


This theorem already implies both conjectures. Indeed, the first identity expresses every generator of $\mathcal{Z}_{q}$ in terms of $\dagger$-values with no entry equal to $\bar{1}$, while the second expresses every admissible Bradley--Zhao value in terms of Bradley--Zhao values with all entries at least $2$.
The proof is obtained by first proving the finite analogues and then taking their limits.

For $N\ge 1$, define finite analogues by
\begin{align*}
\zeta_{q,N}^{\dagger}(\boldsymbol{k}) & :=\sum_{\substack{0<n_{1}\leq\cdots\leq n_{r}<N\\
n_{i}<n_{i+1}\ (k_{i}\neq\bar{1})
}}
\Bigg(\prod_{\substack{1\leq j\leq r\\
k_{j}=\bar{1}
}
}\frac{1}{1-q^{N-n_{j}}}
\Bigg)\Bigg(\prod_{\substack{1\leq j\leq r\\
k_{j}\neq\bar{1}
}
}\frac{q^{n_{j}}}{(1-q^{n_{j}})^{k_{j}}}\Bigg)
\qquad(\boldsymbol{k}=(k_{1},\dots,k_{r})\in\bar{\mathbb{I}}^{\mathrm{adm}}),\\
\zeta_{q,N}^{\BZ}(\boldsymbol{k}) & :=\sum_{0<n_{1}<\cdots<n_{r}<N}\prod_{j=1}^{r}\frac{q^{n_{j}(k_{j}-1)}}{(1-q^{n_{j}})^{k_{j}}}\qquad(\boldsymbol{k}=(k_{1},\dots,k_{r})\in\mathbb{I}).
\end{align*}
As $N\to\infty$, $\zeta_{q,N}^{\dagger}(\boldsymbol{k})$ converges to
$\zeta_q^{\dagger}(\boldsymbol{k})$, and when
$\boldsymbol{k}\in\mathbb{I}^{\mathrm{adm}}$,
$\zeta_{q,N}^{\BZ}(\boldsymbol{k})$ converges to
$\zeta_q^{\BZ}(\boldsymbol{k})$.
When $\boldsymbol{k}\in \mathbb{I}^\mathrm{adm}$, we also introduce another finite model
\[
\zeta_{q,N}^{\ds,\BZ}(\boldsymbol{k})
:=
\sum_{A\subset\{i\mid k_i=1\}}
\sum_{\substack{0<n_1\le \cdots\le n_r<N\\
n_i<n_{i+1}\quad(i\notin A)}}
\left(\prod_{i\in A}\frac{q^{N-n_i}}{1-q^{N-n_i}}\right)
\left(\prod_{i\notin A}\frac{q^{n_i(k_i-1)}}{(1-q^{n_i})^{k_i}}\right),
\]
which also converges to $\zeta_{q}^{\BZ}(\boldsymbol{k})$ as $N\to\infty$.
We further define $\mathbb{Z}$-linear maps
\[
Z_{q,N}^{\dagger}, Z_{q,N}^{\BZ}:\h^1\to\mathbb{Z}[[q]]
\]
by $Z_{q,N}^{\dagger}(1):=Z_{q,N}^{\BZ}(1):=1$ and
\begin{align*}
Z_{q,N}^{\dagger}(yx^{k_1-1}\cdots yx^{k_r-1})&:=\zeta_{q,N}^{\dagger}(k_1,\dots,k_r),\\
Z_{q,N}^{\BZ}(yx^{k_1-1}\cdots yx^{k_r-1})&:=\zeta_{q,N}^{\BZ}(k_1,\dots,k_r).
\end{align*}
Then the finite refinement of \cref{thm:main_inf} is as follows.

\begin{theorem}\label{thm:main_finite}
For $N\ge 1$,$r\ge 0$ and $\boldsymbol{k}=(k_1,\dots,k_r)$, $\boldsymbol{l}=(l_1,\dots,l_r)\in\mathbb{Z}_{\ge 1}^r$, we have
\begin{align*}
\zeta_{q,N}^{\dagger}(\underbrace{\bar{1},\dots,\bar{1}}_{l_{1}-1},k_{1},\dots,\underbrace{\bar{1},\dots,\bar{1}}_{l_{r}-1},k_{r}) & =Z_{q,N}^{\dagger}\bigl(\mathfrak{E}_{q}(l_{1},k_{1},\dots,l_{r},k_{r})\bigr),\\
\zeta_{q,N}^{\ds,\BZ}(\underbrace{1,\dots,1}_{l_{1}-1},k_{1}+1,\dots,\underbrace{1,\dots,1}_{l_{r}-1},k_{r}+1) & =Z_{q,N}^{\BZ}\bigl(\mathfrak{D}_{q}(l_{1},k_{1},\dots,l_{r},k_{r})\bigr).
\end{align*}
\end{theorem}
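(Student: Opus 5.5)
We prove both identities by induction on the length of the index, using generating series in which the last block $(l_r,k_r)$ is the variable part. The words $\mathfrak{E}_q$ and $\mathfrak{D}_q$ are defined recursively, by appending a block $(l,k)$ to a shorter argument list, so we aim for a recursion on the finite side with exactly the same shape.

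For the first identity we fix $N$ and peel off the last entry. Write $S_N(\boldsymbol{k})$ for the left-hand side. Summing over the largest variable $n_r=n<N$ gives
\[
\zeta^{\dagger}_{q,N}(\boldsymbol{k}',\underbrace{\bar1,\dots,\bar1}_{l-1},k)=\sum_{0<n<N}\frac{q^{n}}{(1-q^{n})^{k}}\sum_{0<m_1\le\cdots\le m_{l-1}\le n'}\prod_{j}\frac{1}{1-q^{N-m_j}}\cdot(\text{earlier part}),
\]
where $n'$ is $n-1$ or $n$ depending on the strict/weak convention, and the $\bar1$-variables run weakly between the preceding non-$\bar1$ variable and $n$.

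The key step is a generating-function identity that eliminates a block of $\bar1$'s. Form
\[
\sum_{l\ge1}t^{l-1}\prod_{a<m\le b}\ \cdots=\prod_{a<m<n}\frac{1}{1-t/(1-q^{N-m})},
\]
a complete-homogeneous sum, and rewrite $\frac{1}{1-q^{N-m}}=1+\frac{q^{N-m}}{1-q^{N-m}}$. The factor $q^{N-m}/(1-q^{N-m})$ is then traded for a factor involving $q^m/(1-q^m)$. To do this we use the finite duality/reflection $m\mapsto N-m$, which is why the finite model carries $1-q^{N-n_j}$: under $m\mapsto N-m$ the $\bar1$-factors become $\frac{1}{1-q^{m}}$-type factors. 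These can be expanded as ordinary $\dagger$-entries $q^m/(1-q^m)^{j}$ plus lower terms via partial fractions in $(1-q^{n})$. Concretely, I would prove a single-step lemma of the form
\[
\sum_{n'<m<n}\frac{1}{1-q^{N-m}}\,F(m)=\sum(\text{integer combination of } Z^{\dagger}_{q,N}\text{ of words with one less }\bar1),
\]
valid for every $N$, by a telescoping argument in $N$: compare the $N$ and $N-1$ versions, check that the difference satisfies the same recursion as the right-hand side, and note the trivial base $N=1$, where both sides vanish for $r\ge1$. Iterating this lemma reproduces the recursive definition of $\mathfrak{E}_q$, and the first identity follows by induction on $r$ and on $l_r$.

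The second identity runs in parallel. In $\zeta^{\ds,\BZ}_{q,N}$, the sum over subsets $A$ of the 1-positions means each entry $1$ contributes $\frac{1}{1-q^{n}}\cdot q^{0}$ either at a strict step or, via $A$, as $\frac{q^{N-n}}{1-q^{N-n}}$ at a weak step. This is exactly the structure that makes the $l-1$ ones before $k+1$ sum to a factor expressible through the complete homogeneous generating function above, now with weights $q^{n(k)}/(1-q^n)^{k+1}$. The same single-step lemma, with $x$ replaced by the BZ weights, yields a recursion for $\mathfrak{D}_q$ landing in $\h^{\ge2}$, because every produced entry has exponent of $x$ at least one.

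The main obstacle is the single-step lemma: finding the exact identity that converts $\prod 1/(1-q^{N-m})$ over a weakly increasing block into $\dagger$-words in the variables $q^m$ with integer coefficients, uniformly in $N$. I would first guess it from small cases ($l=2$, $r=1$) by computer, matching against the defining recursion of $\mathfrak{E}_q$, and then prove it by induction on $N$ using $Z_{q,N}-Z_{q,N-1}$. For Theorem~\ref{thm:main_inf}, one then lets $N\to\infty$ termwise, which is justified since all finite models converge coefficientwise in $q$.
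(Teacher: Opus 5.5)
There is a genuine gap. Everything nontrivial is deferred to the ``single-step lemma'', which you neither state precisely nor prove (you propose to find it by computer), and the mechanism you sketch cannot produce it. As written, the lemma's left side $\sum_{n'<m<n}\frac{1}{1-q^{N-m}}F(m)$ depends on the outer variables $n',n$, while its right side is a combination of global values $Z^{\dagger}_{q,N}(\cdot)$. Any version that is local in the outer variables is false. For $N=3$ one checks directly that $\zeta^{\dagger}_{q,3}(\bar{1},1)=\frac{q}{\qq{1}^{2}}+\frac{q^{2}}{\qq{2}^{2}}=\zeta^{\dagger}_{q,3}(2)$, but only after combining the terms $(n_1,n_2)=(1,1)$ and $(2,2)$. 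The $n_2=1$ contributions $\frac{q}{\qq{1}\qq{2}}$ and $\frac{q}{\qq{1}^{2}}$ differ. Partial fractions cannot help, because $1/\qq{N-m}$, viewed as a function of $q^{m}$, has its pole at $q^{m}=q^{N}$ rather than at $q^{m}=1$. The reflection $m\mapsto N-m$ only moves the $N$-dependence onto the factors $q^{n}/\qq{n}^{k}$; that gives the duality, not an elimination of $\bar{1}$'s. Removing the $\bar{1}$'s is intrinsically a global statement in $N$.

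Your guess about the shape of $\mathfrak{E}_q$ and $\mathfrak{D}_q$ is also off, so an induction on $r$ and $l_r$ by appending blocks would not meet their definition. In \cref{def:Eq}, $\mathfrak{E}^{\epsilon}_{q}(\boldsymbol{c})$ is a combination of terms $\mathfrak{E}^{\epsilon}_{q}(\boldsymbol{c}_{A,B})$ followed by a last letter $x^{\alpha(B)}$ or $yx^{j}$. Here $B$ lowers some entries $c_i>1$ by one, and $A\in\mathcal{T}(r)$ deletes a partial domino tiling of entries equal to $1$. This is a recursion on $c_1+\cdots+c_{2r}$. It transcribes, at the level of words, a closed formula for $\xi^{\epsilon}_{q,N+1}(\boldsymbol{c})-\xi^{\epsilon}_{q,N}(\boldsymbol{c})$, using $\Delta_N Z^{\dagger}_{q,N}(uyx^{k-1})=\frac{q^{N}}{\qq{N}^{k}}Z^{\dagger}_{q,N}(u)$ and $\Delta_N Z^{\dagger}_{q,N}(ux^{k})=\qq{N}^{-k}\Delta_N Z^{\dagger}_{q,N}(u)$. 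That difference formula is the missing idea. The paper obtains it by extracting coefficients from
\[
\frac{\prod_{i=0}^{r}(\qq{N}-\qq{X_i+Y_{i+1}})}{\qq{N}}\,G^{\epsilon}_{q,N+1}(\bX)=\sum_{T\in\mathcal{T}(r)}(-1)^{\mathrm{eo}(T)}\Bigl(\frac{q^{N}}{\qq{N}^{\epsilon}}\Bigr)^{\kappa(T)}\widetilde{G}^{\epsilon}_{q,N}(\bX_{\setminus T})
\]
(with $X_0=Y_{r+1}=0$). This identity is proved by introducing a left truncation $M$, deriving the first-order recurrence \eqref{eq:G_diff} in $M$, checking via \eqref{eq:B_diff} that both sides obey the same recurrence, and inducting on $N-M$. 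Only your final step (equal forward differences in $N$, both sides vanishing at $N=1$) agrees with the paper. Likewise, the second identity is not a separate BZ computation. The paper makes it the case $\epsilon=1$ of the same $\dagger$-type statement, using $\zeta^{\ds,\dagger}_{q,N}(\boldsymbol{k})=(-1)^{\wt(\boldsymbol{k})}\zeta^{\ds,\BZ}_{q^{-1},N}(\boldsymbol{k})$ and $\mathfrak{D}_{q}=(-1)^{c_1+\cdots+c_{2r}}\theta(\mathfrak{E}^{1}_{q})$.
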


Passing to the limit as $N\to\infty$ in this finite-level result recovers the corresponding infinite-level result.
Furthermore, writing \(\mathbb{N}:=\{1,2,\ldots\}\), the maps
\[
Z_{q,\mathbb{N}}^{\dagger}(u):=(Z_{q,N}^{\dagger}(u))_{N\in\mathbb{N}},
\qquad
Z_{q,\mathbb{N}}^{\BZ}(u):=(Z_{q,N}^{\BZ}(u))_{N\in\mathbb{N}}
\]
are injective; see \cref{lem:independence}.
Thus, unlike their infinite-level counterparts, the finite-level identities
also characterize the recursively defined words
$\mathfrak{E}_q$ and $\mathfrak{D}_q$.

The proof strategy for \cref{thm:main_finite}, which also yields
\cref{thm:main_inf}, $\mathcal{Z}_q^{\circ}=\mathcal{Z}_q$, and
$\mathcal{Z}_{q,1}^{\circ}=\mathcal{Z}_{q,1}$, is as follows.
We treat the two finite identities in \cref{thm:main_finite} in a unified
framework.  The relevant finite sums are packaged into generating functions,
for which we establish recurrence relations.  Comparing coefficients then
gives recurrences for the finite sums themselves.  The recursive definitions
of $\mathfrak{E}_q$ and $\mathfrak{D}_q$ are chosen so that the corresponding
finite $q$-series satisfy the same recurrences and initial conditions.  Hence
the finite identities follow by induction.

\begin{remark}
The finite sums $\zeta_{q,N}^\dagger(\boldsymbol{k})$ and $\zeta_{q,N}^{\ds,\BZ}\left(\boldsymbol{k}\right)$ are $q$-analogues of the quantities introduced in \cite{HMSW}:
\[
\zeta_{N}\binom{\boldsymbol{l}}{\boldsymbol{k}}
:=
\sum_{\substack{0<n_{1,1}\le\cdots\le n_{1,l_{1}}\\
<n_{2,1}\le\cdots\le n_{2,l_{2}}\\
\cdots\\
<n_{r,1}\le\cdots\le n_{r,l_{r}}<N}}
\prod_{j=1}^{r}
\frac{1}{(N-n_{j,1})\cdots (N-n_{j,l_{j}-1})n_{j,l_{j}}^{k_{j}}},
\]
\[
\zeta_{N}^{\ds}(\boldsymbol{k})
:=\sum_{A\subset\{i\mid k_{i}=1\}}
\sum_{\substack{0<n_{1}\le\cdots\le n_{r}<N\\
n_{i}<n_{i+1}\quad(i\notin A)}}
\left(\prod_{i\in A}\frac{1}{N-n_{i}}\right)
\left(\prod_{i\notin A}\frac{1}{n_{i}^{k_{i}}}\right).
\]
More precisely, with the weights defined by
\[
\wt(\boldsymbol{k},\boldsymbol{l}):=\sum_{j=1}^{r}(k_{j}+l_{j}-1),
\qquad
\wt(\boldsymbol{k}):=\sum_{j=1}^{r}k_{j},
\]
we have
\[
\lim_{q\to1}(1-q)^{\wt(\boldsymbol{k},\boldsymbol{l})}
\zeta_{q,N}^{\dagger}
(\underbrace{\bar{1},\dots,\bar{1}}_{l_{1}-1},k_{1},\dots,
\underbrace{\bar{1},\dots,\bar{1}}_{l_{r}-1},k_{r})
=
\zeta_{N}\binom{\boldsymbol{l}}{\boldsymbol{k}},
\]
and
\[
\lim_{q\to 1}(1-q)^{\wt(\boldsymbol{k})}
\zeta_{q,N}^{\ds,\BZ}(\boldsymbol{k})
=
\zeta_{N}^{\ds}(\boldsymbol{k}).
\]
Moreover, if $k_{1},\dots,k_{r}\ge 2$, then
\[
\zeta_{q,N}^{\ds,\BZ}(\boldsymbol{k})=\zeta_{q,N}^{\BZ}(\boldsymbol{k}).
\]

\end{remark}

\begin{remark}
By symmetry in the definitions of $\mathfrak{D}_{q}$ and $\mathfrak{E}_{q}$, one has
\begin{align*}
\mathfrak{D}_{q}(l_{1},k_{1},\dots,l_{r},k_{r}) & =\mathfrak{D}_{q}(k_{r},l_{r},\dots,k_{1},l_{1}),\\
\mathfrak{E}_{q}(l_{1},k_{1},\dots,l_{r},k_{r}) & =\mathfrak{E}_{q}(k_{r},l_{r},\dots,k_{1},l_{1}).
\end{align*}
Hence the theorem includes the following identities:
\begin{align}
\zeta_{q,N}^{\dagger}(\{\bar{1}\}^{l_{1}-1},k_{1},\dots,\{\bar{1}\}^{l_{r}-1},k_{r}) & =\zeta_{q,N}^{\dagger}(\{\bar{1}\}^{k_{r}-1},l_{r},\dots,\{\bar{1}\}^{k_{1}-1},l_{1}),\label{eq:dual_zeta_qN_flat}\\
\zeta_{q,N}^{\ds,\BZ}(\{1\}^{l_{1}-1},k_{1}+1,\dots,\{1\}^{l_{r}-1},k_{r}+1) & =\zeta_{q,N}^{\ds,\BZ}(\{1\}^{k_{r}-1},l_{r}+1,\dots,\{1\}^{k_{1}-1},l_{1}+1).\label{eq:dual_zeta_qN_diamond}
\end{align}
Moreover, setting $l_{1}=\cdots=l_{r}=1$ in (\ref{eq:dual_zeta_qN_flat}) and then applying the change of variables $n\mapsto N-n$ on the right-hand side, we obtain
\[
\sum_{0<n_{1}<\cdots<n_{r}<N}\prod_{j=1}^{r}\frac{q^{n_{j}}}{(1-q^{n_{j}})^{k_{j}}}
=
\sum_{\substack{0<n_{1,1}\leq\cdots\leq n_{1,k_{1}}\\
<n_{2,1}\leq\cdots\leq n_{2,k_{2}}\\
\cdots\\
<n_{r,1}\leq\cdots\leq n_{r,k_{r}}<N
}
}\prod_{j=1}^{r}\frac{q^{N-n_{j,1}}}{1-q^{N-n_{j,1}}}\frac{1}{1-q^{n_{j,2}}}\cdots\frac{1}{1-q^{n_{j,k_{j}}}}.
\]
This is a $q$-analogue of the MSW formula \cite[Theorem 1.3]{MSW}. After the substitution $q\mapsto q^{-1}$, it coincides with Tsuruta's $q$-analogue of the MSW formula \cite[Theorem 1.2]{Tsuruta_qMSW}. Furthermore, (\ref{eq:dual_zeta_qN_diamond}) is a $q$-analogue of the duality for $\zeta_{N}^{\ds}$ (\cite{HMSW}), and in the limit $N\to\infty$ it yields the duality for  $\zeta_q^{\BZ}$.
\end{remark}

\begin{remark}
The equality $\mathcal{Z}_{q}=\mathcal{Z}_{q}^{\circ}$ also admits an interpretation in terms of polynomial functions on partitions. 
See \cite{Brindle_qmzv, Partition}.
\end{remark}

\subsection*{Acknowledgments}
The authors are grateful to Henrik Bachmann for helpful comments on this research.
This research was supported by JSPS KAKENHI Grant Numbers JP22K03244.

\section{Preliminaries}\label{sec:preliminaries}

Before turning to the proofs, we collect some combinatorial notation and a few basic lemmas.

\subsection{Notation for subsets of $\{1,\dots,n\}$}

For $n\ge 0$, let $[n]$ denote the set $\{1,\dots,n\}$. For $r\in\mathbb{Z}_{\ge 0}$, define
\[
\mathrm{EO}(r):=\left\{S\subset [2r]\,\middle|\, \exists R\subset [r-1]\text{ such that }S=\bigcup_{j\in R}\{2j,2j+1\}\right\},
\]
\[
\mathrm{OE}(r):=\left\{S\subset [2r]\,\middle|\, \exists R\subset [r]\text{ such that }S=\bigcup_{j\in R}\{2j-1,2j\}\right\},
\]
\[
\mathrm{OE}^*(r):=\{S\in\mathrm{OE}(r)\mid \forall i,j\in S,\ |i-j|\neq 2\},
\]
\[
\mathcal{T}(r):=\{S\cup S'\subset [2r]\mid S\in\mathrm{EO}(r),\ S'\in\mathrm{OE}^*(r),\ S\cap S'=\emptyset\}.
\]
For $T\in\mathcal{T}(r)$, we put
$\kappa(T):=\frac{\#T}{2}.$
Equivalently, $\mathcal{T}(r)$ is the set of partial domino tilings of a row of length $2r$, numbered from left to right by $1,\dots,2r$, in which no two odd-even dominoes are adjacent, and $\kappa(T)$ denotes the number of dominoes used in the partial tiling $T$.

For a finite subset $S$ of $\mathbb{Z}_{>0}$, define
\[
\mathrm{eo}(S):=\#\{ j\in\mathbb{Z}_{>0} \mid \{2j,2j+1\}\subset S \},
\]
\[
\mathrm{oe}(S):=\#\{ j\in\mathbb{Z}_{>0} \mid \{2j-1,2j\}\subset S \},
\]
\[
\sigma_{S}:\mathbb{Z}_{>0}\setminus S\xrightarrow{\sim}\mathbb{Z}_{>0}\quad;\quad i\mapsto\#([i]\setminus S).
\]

For $\boldsymbol{c}=(c_1,\dots,c_n)\in\mathbb{Z}_{\ge 1}^n$, let
\[
[n]_{\boldsymbol{c}}^1:=\{i\in[n]\mid c_i=1\},
\qquad
[n]_{\boldsymbol{c}}^{>1}:=\{i\in[n]\mid c_i>1\}=[n]\setminus [n]_{\boldsymbol{c}}^1.
\]
If $A\subset [n]_{\boldsymbol{c}}^1$ and $B\subset [n]_{\boldsymbol{c}}^{>1}$, we write $\boldsymbol{c}_{A,B}\in\mathbb{Z}_{\ge 1}^{n-\#A}$ for the sequence obtained from $\boldsymbol{c}$
by subtracting $1$ from the entries indexed by $B$ and deleting the entries indexed by $A$.

For $B\subset [2s]$, define
\begin{align*}
\alpha(B)
&:= \#\{i\in\{0,\dots,s\}\mid B\cap\{2i,2i+1\}\neq\emptyset\} \\
&= \#B-\mathrm{eo}(B),\\
\beta(B)
&:= \#\{i\in[s]\mid B\cap\{2i-1,2i\}\neq\emptyset\} \\
&= \#B-\mathrm{oe}(B).
\end{align*}
Thus $\alpha(B)$ (respectively, $\beta(B)$) is the minimal number of even-odd (respectively, odd-even) dominoes needed to cover $B$.

If $A\in\mathcal{T}(s)$ and $B\subset[2s]$ with $A\cap B=\emptyset$, define
\[
\beta_A(B):=\beta(\sigma_A(B)).
\]
In other words, $\beta_A(B)$ is the minimal number of odd-even dominoes needed to cover $B$ after the positions indexed by $A$ have been removed.

Finally, for a sequence $(W_1,\dots,W_n)$ and a subset $T\subset\{1,\dots,n\}$, we write
\[
(W_1,\dots,W_n)_{\setminus T}
\]
for the sequence obtained by deleting the entries with indices in $T$. This notation will be used in the generating-series identities.

\subsection{Auxiliary models and spanning lemmas}

We first recall the Schlesinger--Zudilin (SZ) model, which serves as a convenient bridge between the $\dagger$-model and the spaces $\mathcal{Z}_q$ and $\mathcal{Z}_q^\circ$.
\begin{definition}
For $\boldsymbol{k}=(k_1,\dots,k_r)\in\mathbb{Z}_{\ge 0}^r$ with either $r=0$ or $k_r>0$, define the SZ-model of $q$-MZVs by $\zeta_q^{\SZ}(\emptyset):=1$ and
\[
\zeta_q^{\SZ}(k_1,\dots,k_r)
:=
\sum_{0<n_1<\cdots<n_r}
\prod_{j=1}^r \frac{q^{n_jk_j}}{(1-q^{n_j})^{k_j}}
\in \mathbb{Q}[[q]]
\qquad (r\ge 1).
\]
\end{definition}

The following lemma expresses the spaces $\mathcal{Z}_q$ and $\mathcal{Z}_q^\circ$ in terms of the SZ $q$-MZVs.

\begin{lemma}[cf.\ \cite{Bachmann_Kuehn_dimension_conjecture,Brindle_qmzv}]\label{lem:space_SZ}
We have
\[
\mathcal{Z}_q
=
\Span_{\mathbb{Q}}\bigl\{\zeta_q^{\SZ}(k_1,\dots,k_r)
\mid r\ge 0,\ k_r\ge 1,\ k_i\ge 0\bigr\},
\]
\[
\mathcal{Z}_q^{\circ}
=
\Span_{\mathbb{Q}}\bigl\{\zeta_q^{\SZ}(k_1,\dots,k_r)
\mid r\ge 0,\ k_i\ge 1\bigr\}.
\]
\end{lemma}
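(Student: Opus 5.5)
The plan is to relate the SZ generators to the generators in the definitions of $\mathcal{Z}_q$ and $\mathcal{Z}_q^\circ$ by a change of basis in the numerator polynomials, performed one depth at a time. The key observation is that for each $k\ge 1$ the polynomials $X^a(1-X)^{k-a}$, $0\le a\le k$, form a basis of the polynomials of degree $\le k$, triangular with respect to the basis $X^a$ after reversing the order. Moreover, $Q\in X\mathbb{Q}[X]$ with $\deg Q\le k$ if and only if $Q$ lies in the span of $X^a(1-X)^{k-a}$ with $1\le a\le k$. Hence
\[
\frac{Q(q^n)}{(1-q^n)^k}=\sum_{a=0}^{k} c_a\,\frac{q^{na}}{(1-q^n)^{a}},
\]
and the coefficient $c_0$ vanishes exactly when $Q(0)=0$.

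First I handle the inclusion $\subset$ in the first equality. Take a generator $\zeta_q(k_1,\dots,k_r;Q_1,\dots,Q_r)$ of $\mathcal{Z}_q$ and expand each factor as above. By multilinearity the generator becomes a $\mathbb{Q}$-linear combination of sums
\[
\sum_{0<n_1<\cdots<n_r}\prod_j \frac{q^{n_ja_j}}{(1-q^{n_j})^{a_j}}=\zeta_q^{\SZ}(a_1,\dots,a_r),
\]
where $0\le a_j\le k_j$. Since $Q_r(0)=0$, we get $a_r\ge 1$. For $\mathcal{Z}_q^\circ$ every $Q_j(0)=0$, so all $a_j\ge1$.

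The reverse inclusion is immediate. The term $\zeta_q^{\SZ}(a_1,\dots,a_r)$ with $a_r\ge1$ is itself a generator: take $k_j=a_j$ and $Q_j=X^{a_j}$ when $a_j\ge1$. When some $a_j=0$, set $k_j:=1$ and $Q_j:=1-X$, so that the factor equals $1=q^{0}/(1-q^n)^0$. This is allowed for $j<r$ in $\mathcal{Z}_q$, since $k_j\ge1$ is required. In $\mathcal{Z}_q^\circ$ all $a_j\ge 1$, and $Q_j=X^{a_j}\in X\mathbb{Q}[X]$ with $\deg Q_j= k_j$.

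The only subtle point is this mismatch between the indexing conventions, namely $k_j\ge 1$ for $\zeta_q(\,;\,)$ versus $a_j\ge 0$ for SZ. The padding trick $Q_j=1-X$ resolves it. I would also check that the constant term condition is tracked correctly under the change of basis: evaluating at $X=0$ kills every basis element with $a\ge1$ and leaves the $a=0$ element equal to $1$, so $c_0=Q(0)$. The case $r=0$ is trivial.
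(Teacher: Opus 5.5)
Your proof is correct and complete. The paper does not prove this lemma itself; it only cites Bachmann--K\"uhn and Brindle. Your argument is therefore a self-contained replacement for that citation, and it is the natural one.

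Both inclusions go through as you describe:
\begin{itemize}
\item The polynomials $X^a(1-X)^{k-a}$, $0\le a\le k$, are unitriangular against the monomials, since the lowest-degree term of $X^a(1-X)^{k-a}$ is $X^a$. So they form a basis, even over $\mathbb{Z}$, and $c_0=Q(0)$.
\item Because every resulting piece has $a_r\ge 1$, each piece converges in $\mathbb{Q}[[q]]$. This justifies splitting the infinite sum termwise.
\item For the reverse inclusion, the padding $k_j=1$, $Q_j=1-X$ for zero entries with $j<r$ is legitimate. The constraint $Q_j\in X\mathbb{Q}[X]$ is imposed only on $Q_r$ in $\mathcal{Z}_q$.
\item For $\mathcal{Z}_q^{\circ}$, no padding is ever needed.
\end{itemize}
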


Next, we give transformation formulas between the SZ-model and the $\dagger$-model.
\begin{lemma}\label{lem:transformation_between_models}
For $r\ge 0$ and $\boldsymbol{l}=(l_1,\dots,l_r),\boldsymbol{k}=(k_1,\dots,k_r)\in\mathbb{Z}_{\ge 1}^r$, we have
\begin{align*}
\zeta_{q}^{\SZ}(\{0\}^{l_{1}-1},k_{1},\dots,\{0\}^{l_{r}-1},k_{r}) & =\sum_{\boldsymbol{l}',\boldsymbol{k}'\in\mathbb{Z}_{\ge1}^{r}}\bar{b}(\boldsymbol{l};\boldsymbol{l}')\bar{b}(\boldsymbol{k};\boldsymbol{k}')\zeta_{q}^{\dagger}(\{\bar{1}\}^{l_{1}'-1},k_{1}',\dots,\{\bar{1}\}^{l_{r}'-1},k_{r}'),\\
\zeta_{q}^{\SZ}(\boldsymbol{k}) & =\sum_{\boldsymbol{k}'\in\mathbb{Z}_{\ge1}^{r}}\bar{b}(\boldsymbol{k};\boldsymbol{k}')\zeta_{q}^{\dagger}(\boldsymbol{k}'),\\
\zeta_{q}^{\dagger}(\{\bar{1}\}^{l_{1}-1},k_{1},\dots,\{\bar{1}\}^{l_{r}-1},k_{r}) & =\sum_{\boldsymbol{l}',\boldsymbol{k}'\in\mathbb{Z}_{\ge1}^{r}}b(\boldsymbol{l};\boldsymbol{l}')b(\boldsymbol{k};\boldsymbol{k}')\zeta_{q}^{\SZ}(\{0\}^{l_{1}'-1},k_{1}',\dots,\{0\}^{l_{r}'-1},k_{r}'),\\
\zeta_{q}^{\dagger}(\boldsymbol{k}) & =\sum_{\boldsymbol{k}'\in\mathbb{Z}_{\ge1}^{r}}b(\boldsymbol{k};\boldsymbol{k}')\zeta_{q}^{\SZ}(\boldsymbol{k}').
\end{align*}
Here
\[
b(\boldsymbol{m};\boldsymbol{m}'):=\prod_{j=1}^{r}\binom{m_{j}-1}{m_{j}'-1},\quad\bar{b}(\boldsymbol{m};\boldsymbol{m}'):=\prod_{j=1}^{r}(-1)^{m_{j}-m_{j}'}\binom{m_{j}-1}{m_{j}'-1}\qquad(\boldsymbol{m},\boldsymbol{m}'\in\mathbb{Z}_{\ge1}^{r}).
\]
Each sum is effectively finite, since $b(\boldsymbol{m};\boldsymbol{m}')=\bar{b}(\boldsymbol{m};\boldsymbol{m}')=0$ unless $m_{j}'\le m_{j}$ for all $j$.
\end{lemma}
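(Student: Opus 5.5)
The plan is to prove the last two formulas (the $\dagger\to\SZ$ direction) by a termwise binomial expansion of the summands. The first two formulas then follow by binomial inversion, since the matrices $b$ and $\bar b$ are mutually inverse.

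The basic identity is obtained by writing $q^n=1-(1-q^n)$. For $k\ge1$,
\[
\frac{q^{n}}{(1-q^{n})^{k}}=\frac{q^{n}}{1-q^n}\Bigl(\frac{1-q^n+q^n}{1-q^n}\Bigr)^{k-1}
=\sum_{k'=1}^{k}\binom{k-1}{k'-1}\frac{q^{nk'}}{(1-q^{n})^{k'}} .
\]
Here we expand $(1+\tfrac{q^n}{1-q^n})^{k-1}$ by the binomial theorem. Applied factor by factor, this gives the fourth formula directly. Indeed, $\zeta_q^\dagger(\boldsymbol k)$ with no $\bar1$ entries is the sum over $0<n_1<\dots<n_r$ of $\prod_j q^{n_j}(1-q^{n_j})^{-k_j}$, which becomes $\sum_{\boldsymbol k'}b(\boldsymbol k;\boldsymbol k')\zeta_q^{\SZ}(\boldsymbol k')$.

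For the third formula we must handle the blocks of $\bar1$'s. A block $(\{\bar1\}^{l-1},k)$ contributes a sum over $n_1\le\dots\le n_{l-1}\le n_l$, where $n_{l-1}$ is strictly greater than the previous block's last index. The $\bar1$ entries carry weight $1$, so the inner sum over the $\bar1$ indices just counts weakly increasing tuples. The idea is to rewrite this count in terms of strictly increasing tuples of "zeros" in the SZ model, where each zero corresponds to a factor $q^{0}/(1-q^{n})^{0}=1$ at a strictly increasing index.

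Concretely, a weakly increasing chain $m<n_1\le\dots\le n_{l-1}\le n$ collapses to a strictly increasing chain of distinct values among $n_1,\dots,n_{l-1}$. Some of these values may coincide with $n$, and that possibility is the delicate point. The number of weak chains with a prescribed set $\{p_1<\dots<p_{j}\}$ of distinct values strictly between $m$ and $n$, with the option of some $n_i=n$, is a binomial coefficient $\binom{l-1}{\cdot}$. Summing over the options with distinct values $p$ should give $\sum_{l'}\binom{l-1}{l'-1}$ times the SZ sum over strict chains $m<p_1<\dots<p_{l'-1}<n$. I would verify this counting identity carefully, either via generating functions in the number of repeats or by the standard stars-and-bars bijection.

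A cleaner alternative, which I would try first, is a purely algebraic route. Write the weight-one contribution of a weakly increasing $\bar1$ chain using $\sum_{m<p\le n}1$-type nested sums. Then use the analogue of the same binomial trick: the operator "sum over $p\le n$ with repetition" equals the composition of "sum over $p<n$ strictly" with the identity, which yields the $\binom{l-1}{l'-1}$ pattern. The main obstacle is getting the boundary conventions right. This means strict versus weak inequality at $n_{l-1}$ versus $n_l$, and at the junction with the previous block's $k$-entry, so that the coefficient is exactly $b(\boldsymbol l;\boldsymbol l')$ with no extra terms.

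Having the $k$-part expansion (independent, factor by factor) and the $l$-part expansion (purely combinatorial), their product gives the third formula. Finally, the matrix $\bigl(\binom{m-1}{m'-1}\bigr)$ is unitriangular with inverse $\bigl((-1)^{m-m'}\binom{m-1}{m'-1}\bigr)$. Inverting componentwise in the third and fourth formulas yields the first and second. All sums are finite, so there are no convergence issues in $\mathbb{Q}[[q]]$.
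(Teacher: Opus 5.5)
Your proposal is correct and follows the paper's proof in substance. Like the paper, you group the summands by the positions $0=n_0<n_1<\dots<n_r$ of the non-$\bar1$ (resp.\ nonzero) entries, expand $q^{n}/(1-q^{n})^{k}$ binomially, and use that the weak-chain count $\binom{n_j-n_{j-1}+l_j-2}{l_j-1}$ equals $\sum_{l'}\binom{l_j-1}{l'-1}\binom{n_j-n_{j-1}-1}{l'-1}$. Your stars-and-bars classification by the set of distinct values below $n_j$ (which gives coefficient $\binom{l_j-1}{l'-1}$) is exactly a proof of this Vandermonde identity.

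The only difference is that the paper proves the two SZ-to-$\dagger$ formulas directly from the signed versions of these identities. You instead obtain them by inverting the (tensor product of) Pascal matrices, which is equally valid because the matrices are unitriangular and all sums are finite.
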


\begin{proof}
By definition,
\begin{align*}
\zeta_{q}^{\SZ}(\{0\}^{l_{1}-1},k_{1},\dots,\{0\}^{l_{r}-1},k_{r}) & =\sum_{0=n_{0}<n_{1}<\cdots<n_{r}}\prod_{j=1}^{r}\binom{n_{j}-n_{j-1}-1}{l_{j}-1}\frac{q^{n_{j}k_{j}}}{(1-q^{n_{j}})^{k_{j}}},\\
\zeta_{q}^{\dagger}(\{\bar{1}\}^{l_{1}-1},k_{1},\dots,\{\bar{1}\}^{l_{r}-1},k_{r}) & =\sum_{0=n_{0}<n_{1}<\cdots<n_{r}}\prod_{j=1}^{r}\binom{n_{j}-n_{j-1}+l_{j}-2}{l_{j}-1}\frac{q^{n_{j}}}{(1-q^{n_{j}})^{k_{j}}}.
\end{align*}
Using the identities
\[
\binom{n_{j}-n_{j-1}-1}{l_{j}-1}
=
\sum_{l_{j}'=1}^{l_{j}}(-1)^{l_{j}-l_{j}'}
\binom{l_{j}-1}{l_{j}'-1}
\binom{n_{j}-n_{j-1}+l_{j}'-2}{l_{j}'-1}
\]
and
\[
\frac{q^{n_{j}k_{j}}}{(1-q^{n_{j}})^{k_{j}}}
=
\sum_{k_{j}'=1}^{k_{j}}(-1)^{k_{j}-k_{j}'}
\binom{k_{j}-1}{k_{j}'-1}
\frac{q^{n_{j}}}{(1-q^{n_{j}})^{k_{j}'}},
\]
we obtain the first formula. The second formula is the special case $l_{1}=\cdots=l_{r}=1$.

Similarly,
\[
\binom{n_{j}-n_{j-1}+l_{j}-2}{l_{j}-1}
=
\sum_{l_{j}'=1}^{l_{j}}\binom{l_{j}-1}{l_{j}'-1}\binom{n_{j}-n_{j-1}-1}{l_{j}'-1}
\]
and
\[
\frac{q^{n_{j}}}{(1-q^{n_{j}})^{k_{j}}}
=
\sum_{k_{j}'=1}^{k_{j}}\binom{k_{j}-1}{k_{j}'-1}\frac{q^{n_{j}k_{j}'}}{(1-q^{n_{j}})^{k_{j}'}}
\]
yield the third formula, and the fourth again follows by setting $l_{1}=\cdots=l_{r}=1$.
\end{proof}

The preceding lemma immediately yields the spanning statements used in the introduction.

\begin{lemma}\label{lem:spaces_qMZV}
We have
\begin{align*}
\mathcal{Z}_{q} & =\Span_{\mathbb{Q}}\{\zeta_{q}^{\dagger}(\boldsymbol{k})\,|\,\boldsymbol{k}\in\bar{\mathbb{I}}^{\mathrm{adm}}\},\\
\mathcal{Z}_{q}^{\circ} & =\Span_{\mathbb{Q}}\{\zeta_{q}^{\dagger}(\boldsymbol{k})\,|\,\boldsymbol{k}\in\bar{\mathbb{I}}^{\mathrm{adm}},k_{i}\neq\bar{1}\},\\
\mathcal{Z}_{q,1} & =\Span_{\mathbb{Q}}\{\zeta_{q}^{\BZ}(\boldsymbol{k})\,|\,\boldsymbol{k}\in\mathbb{I}^{\mathrm{adm}}\},\\
\mathcal{Z}_{q,1}^{\circ} & =\Span_{\mathbb{Q}}\{\zeta_{q}^{\BZ}(\boldsymbol{k})\,|\,\boldsymbol{k}\in\mathbb{I}^{\mathrm{adm}},k_{i}\neq1\}.
\end{align*}
\end{lemma}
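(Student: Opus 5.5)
The first two equalities come from \cref{lem:space_SZ} and \cref{lem:transformation_between_models}; the last two are proved directly from the definitions.

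\emph{The $\dagger$-spaces.} Every index in $\bar{\mathbb{I}}^{\mathrm{adm}}$ can be written uniquely as $(\{\bar{1}\}^{l_1-1},k_1,\dots,\{\bar{1}\}^{l_r-1},k_r)$ with $l_j,k_j\ge1$. Under the dictionary $\bar 1\leftrightarrow 0$, the SZ indices with $k_r\ge1$ and $k_i\ge0$ correspond exactly to these. The third formula of \cref{lem:transformation_between_models} puts every $\zeta_q^\dagger$ in the SZ span, and the first formula gives the converse. Since the change of basis is unitriangular with respect to the order $(\boldsymbol{l}',\boldsymbol{k}')\le(\boldsymbol{l},\boldsymbol{k})$, the two spans coincide. \cref{lem:space_SZ} then gives the first equality.

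For the second equality, restrict to $l_j=1$ and use the second and fourth formulas of \cref{lem:transformation_between_models}. These relate $\zeta_q^\dagger(\boldsymbol{k})$ with all $k_i\ge1$ to $\zeta_q^{\SZ}(\boldsymbol{k}')$ with all $k'_i\ge1$. Combined with \cref{lem:space_SZ}, this gives $\mathcal{Z}_q^\circ$.

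\emph{The BZ-spaces.} For $k\ge1$, the functions $q^{n(a)}/(1-q^n)^k$ with $0\le a\le k-1$ span the same space as $Q(q^n)/(1-q^n)^k$ with $\deg Q\le k-1$. Likewise, for the last factor, $1\le a\le k-1$ spans the space of $Q\in X\mathbb{Q}[X]$ with $\deg Q\le k-1$. Expanding each $Q_j$ into monomials shows that $\mathcal{Z}_{q,1}$ is spanned by
\[
\sum_{0<n_1<\cdots<n_r}\prod_{j}\frac{q^{n_ja_j}}{(1-q^{n_j})^{k_j}},\qquad 0\le a_j\le k_j-1,\quad a_r\ge1 .
\]
To reduce these to BZ values, I will induct on the total weight using the identity
\[
\frac{q^{na}}{(1-q^n)^k}=\frac{q^{n(a+1)}}{(1-q^n)^k}+\frac{q^{na}}{(1-q^n)^{k-1}}.
\]
This raises $a_j$ until $a_j=k_j-1$, which is the BZ form, at the cost of terms of lower weight. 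The lower-weight terms stay in the allowed range: when $k$ drops to $k-1$, the exponent satisfies $a\le k-2$, so the constraint $a\le (k-1)-1$ still holds. In the last slot $a_r\ge1$ is preserved, so admissibility ($k_r\ge2$) is automatic there. In the other slots $k_j=1$ forces $a_j=0$, which is already the BZ form $k_j-1=0$.

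The $\circ$-version is proved the same way. There $a_j\ge1$ for every $j$, so $k_j\ge2$ throughout. The lowered terms keep $a\ge1$ and hence $k-1\ge a+1\ge2$, so all entries stay at least $2$. Conversely, each BZ value with $k_r\ge2$ (respectively, with all $k_i\ge2$) is visibly of the required form. This gives the last two equalities.

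The main point needing care is the index bookkeeping in the $\dagger$/SZ correspondence. Specifically, the admissibility condition $k_r\neq\bar1$ must match $k_r\ge1$ in the SZ model, and the $l_j$ must be shown to range over all positive integers.
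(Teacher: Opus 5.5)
Your proposal is correct. For the two $\dagger$-equalities you follow the paper's route exactly: \cref{lem:space_SZ} together with the mutual expansions in \cref{lem:transformation_between_models}. Mutual inclusion already suffices there, so the unitriangularity remark is harmless but not needed.

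For the two Bradley--Zhao equalities your route differs. The paper gives no argument and simply cites Propositions~4 and~16 of \cite{Brindle_qmzv}. You instead prove them directly from the definitions used in this paper. You expand each $Q_j$ into monomials $X^{a_j}$ and then repeatedly apply $\frac{q^{na}}{(1-q^n)^k}=\frac{q^{n(a+1)}}{(1-q^n)^k}+\frac{q^{na}}{(1-q^n)^{k-1}}$. The induction is best phrased lexicographically on the pair (weight, defect $\sum_j(k_j-1-a_j)$).

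Your bookkeeping checks out. The identity is applied only when $a\le k-2$, so the lowered term satisfies $k-1\ge a+1$. This gives $k-1\ge 1$ in general, and $k-1\ge 2$ whenever $a\ge 1$: in the last slot for $\mathcal{Z}_{q,1}$, and in every slot for $\mathcal{Z}_{q,1}^{\circ}$. Slots with $k_j=1$ are already in BZ form. The reverse inclusions follow by taking $Q_j=X^{k_j-1}$.

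The paper's citation buys brevity. Your argument makes the lemma self-contained and shows that these two equalities are elementary consequences of the definitions of $\mathcal{Z}_{q,1}$ and $\mathcal{Z}_{q,1}^{\circ}$.
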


\begin{proof}
The first two identities follow from \cref{lem:space_SZ,lem:transformation_between_models}. The last two are exactly \cite[Proposition~4]{Brindle_qmzv} and \cite[Proposition~16]{Brindle_qmzv}.
\end{proof}

\begin{remark}
It is also known that $\mathcal{Z}_{q,d}$ and $\mathcal{Z}_{q,d}^{\circ}$ can be described in terms of another model, denoted by $g$.
Moreover, \cref{lem:spaces_qMZV} can also be proved using the transformation formula between the generating functions of $g$ and $\zeta_{q}^{\dagger}$ in \cref{rem:genfunc_Gg}.
\end{remark}

\subsection{Linear independence of finite-level $q$-MZVs}

By the following lemma, the first (resp.\ second) identity in \cref{thm:main_finite} characterizes $\mathfrak{E}_{q}$ (resp.\ $\mathfrak{D}_{q}$).

\begin{lemma}\label{lem:independence}
Define $\mathbb{Z}$-linear maps $Z_{q,\mathbb{N}}^{\dagger},Z_{q,\mathbb{N}}^{\BZ}:\h^1\to\mathbb{Z}[[q]]^{\mathbb{N}}$ by
\[
Z_{q,\mathbb{N}}^{\dagger}(u):=(Z_{q,N}^{\dagger}(u))_{N\in\mathbb{N}},
\qquad
Z_{q,\mathbb{N}}^{\BZ}(u):=(Z_{q,N}^{\BZ}(u))_{N\in\mathbb{N}}.
\]
Then both maps are injective.
\end{lemma}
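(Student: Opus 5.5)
The plan is to reduce the statement to the classical linear independence of harmonic sums by taking the limit $q\to1$ with a weight normalisation, and then to peel off one weight at a time.

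\textbf{Setup.} Equip $\h^1$ with the weight grading $\wt(yx^{k_1-1}\cdots yx^{k_r-1})=k_1+\dots+k_r$. Suppose $u\in\h^1$ is nonzero and $Z_{q,N}^{\dagger}(u)=0$ for all $N$ (the $\BZ$ case is identical). Write $u=u_w+u_{<w}$, where $u_w\neq0$ is the homogeneous component of top weight $w$.

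\textbf{Step 1: passage to $q\to1$.} Each $Z_{q,N}^{\dagger}(v)$, for a word $v$, is a finite sum of rational functions of $q$ with poles only at roots of unity of order $<N$. So $(1-q)^{w}Z_{q,N}^{\dagger}(v)$ has a limit as $q\to1^-$.
\begin{itemize}
\item For a word $v=yx^{k_1-1}\cdots yx^{k_r-1}$ of weight $w$, we have $q^{n}(1-q)^{k}/(1-q^{n})^{k}\to n^{-k}$. Hence
\[
\lim_{q\to1}(1-q)^{w}Z_{q,N}^{\dagger}(v)=H_N(k_1,\dots,k_r):=\sum_{0<n_1<\cdots<n_r<N}\prod_{j}n_j^{-k_j}.
\]
\item For words of weight $<w$ the limit is $0$, since an extra factor $(1-q)$ survives.
\item The same holds for $Z_{q,N}^{\BZ}$: the numerators $q^{n(k-1)}$ also tend to $1$.
\end{itemize}
Applying $\lim_{q\to1}(1-q)^w$ to $Z_{q,N}^{\dagger}(u)=0$ therefore gives $H_N(u_w)=0$ for all $N\ge1$, where $H_N$ is extended linearly to $\h^1$.

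\textbf{Step 2: independence of harmonic sums.} It remains to show that $N\mapsto H_N(v)$, for distinct words $v\in\h^1$, are $\mathbb{Q}$-linearly independent functions on $\mathbb{N}$. This is classical: harmonic sums are even algebraically independent over $\mathbb{Q}(N)$. I would prove the needed linear statement by induction on depth, in the following strengthened form: the functions $N\mapsto R(N)H_N(v)$, with $R\in\mathbb{Q}(N)$ and $v$ ranging over words, have no nontrivial finite $\mathbb{Q}(N)$-linear relation valid for all large $N$.
\begin{itemize}
\item Given such a relation of maximal depth, apply the difference operator $\Delta F(N)=F(N+1)-F(N)$, using $\Delta H_N(v\,yx^{k-1})=H_N(v)N^{-k}$.
\item This produces a relation in which the top-depth part reappears with coefficients $R(N+1)-R(N)$, plus lower-depth terms.
\item Compare with the original relation after normalising one top-depth coefficient to $1$. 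This yields a relation with one fewer top-depth term, or forces an equation $R(N+1)-R(N)=\sum_k c_k(N)N^{-k}$ in rational functions.
\item Such an equation is impossible unless the $H_N$ of lower depth coincide. This is the standard telescoping argument, using that $\sum 1/N$-type sums are not rational.
\end{itemize}
Concluding, $u_w=0$, which contradicts the choice of $w$. Hence $u=0$.

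\textbf{Main obstacle.} The $q\to1$ reduction is routine; the real content is Step 2, namely the independence of harmonic sums over $\mathbb{Q}$ (in fact over $\mathbb{Q}(N)$). I would first look for a citable reference, e.g.\ the algebraic independence of harmonic sums in the symbolic-summation literature. Failing that, I would carry out the difference-field induction sketched above carefully, since $H_N(1)$ being transcendental over $\mathbb{Q}(N)$ is the base case driving everything.

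A fallback argument that stays inside $q$-series is to compare principal parts at primitive $N$-th roots of unity, where only the newly added term $q^N/(1-q^N)^k$ has a pole. However, separating the different $k$ there seemed harder to control than the harmonic-sum route.
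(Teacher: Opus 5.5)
Your proposal is correct and follows essentially the same route as the paper, which likewise multiplies by $(1-q)^K$ for the top weight $K$ and lets $q\to1$ to obtain $Z_N(u_K)=0$ for all $N$. It then concludes $u_K=0$ from the linear independence of finite multiple harmonic sums, simply citing Yamamoto's injectivity theorem for that step, so your difference-field sketch is optional; if you do carry it out, note that the $c_k$ in $R(N+1)-R(N)=\sum_k c_kN^{-k}$ must be constants, which they are once the top-depth coefficients are shown constant (with rational $c_k(N)$ the equation is solvable, e.g.\ $\Delta(1/N)=-1/(N(N+1))$).
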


\begin{proof}
Let $Z_N:\h^1\to\mathbb{Q}$ and $Z_{\mathbb{N}}:\h^1\to\mathbb{Q}^{\mathbb{N}}$ be defined by
\begin{align*}
Z_N(yx^{k_1-1}\cdots yx^{k_r-1})
&:=
\sum_{0<n_1<\cdots<n_r<N}
\frac{1}{n_1^{k_1}\cdots n_r^{k_r}},\\
Z_{\mathbb{N}}(u)&:=(Z_N(u))_{N\in\mathbb{N}}.
\end{align*}
By the linear independence of finite harmonic sums, $Z_{\mathbb{N}}$ is injective; see \cite[Theorem~3.1]{Yamamoto_injectivity}. If $k_1+\cdots+k_r\le K$, then
\begin{align*}
\lim_{q\to 1}(1-q)^K\zeta_{q,N}^{\dagger}(k_1,\dots,k_r)
&=
\lim_{q\to 1}(1-q)^K\zeta_{q,N}^{\BZ}(k_1,\dots,k_r)\\
&=
\begin{cases}
\displaystyle \sum_{0<n_1<\cdots<n_r<N}\frac{1}{n_1^{k_1}\cdots n_r^{k_r}} & \text{if }k_1+\cdots+k_r=K,\\[1ex]
0 & \text{if }k_1+\cdots+k_r<K.
\end{cases}
\end{align*}
Now let $u\in\h^1\setminus\{0\}$ and write $u=\sum_{k=0}^K u_k$, where $u_k$ denotes the homogeneous part of degree $k$ and $u_K\neq 0$. If $Z_{q,\mathbb{N}}^{\dagger}(u)=0$, then for every $N$,
\[
0=\lim_{q\to 1}(1-q)^K Z_{q,N}^{\dagger}(u)=Z_N(u_K).
\]
Since $Z_{\mathbb{N}}$ is injective, we get $u_K=0$, a contradiction. Hence $Z_{q,\mathbb{N}}^{\dagger}$ is injective. The proof for $Z_{q,\mathbb{N}}^{\BZ}$ is identical.
\end{proof}

\section{Proof of the main theorem}

We now turn to the proof of the main theorem. The strategy is to encode the truncated $q$-MZVs in a family of generating series,
derive recurrence relations for these series, and then read off a recursive formula for the words that realize the corresponding $q$-series.
In this section, we set $\qq{n}:=1-q^n$.
To present the proof in parallel, we use the following finite model
\[
\zeta_{q,N}^{\ds,\dagger}(\boldsymbol{k})
:=
(-1)^{\wt(\bm k)}\zeta_{q^{-1},N}^{\ds,\BZ}(\boldsymbol{k})=
\sum_{A\subset\{i\mid k_i=1\}}
\sum_{\substack{0<n_1\le \cdots\le n_r<N\\
n_i<n_{i+1}\quad(i\notin A)}}
\left(\prod_{i\in A}\frac{1}{\qq{N-n_i}}\right)
\left(\prod_{i\notin A}\frac{q^{n_i}}{\qq{n_i}^{k_i}}\right)
\]
in place of $\zeta_{q,N}^{\ds,\BZ}$.

\subsection{Generating functions}
In this subsection, we introduce generating functions for the truncated $q$-MZVs and prove recurrence relations for them.
First, we introduce an auxiliary left-truncation parameter $M\in \{0,\dots,N-1\}$ by setting
\begin{align*}
\zeta_{q,M,N}^{\dagger}(\boldsymbol{k}) & :=\sum_{\substack{M<n_{1}\leq\cdots\leq n_{r}<N\\
n_{i}<n_{i+1}\ (k_{i}\neq\bar{1})
}
}\Bigg(\prod_{\substack{1\leq j\leq r\\
k_{j}=\bar{1}
}
}\frac{1}{\qq{N-n_{j}}}\Bigg)\Bigg(\prod_{\substack{1\leq j\leq r\\
k_{j}\neq\bar{1}
}
}\frac{q^{n_{j}}}{\qq{n_{j}}^{k_{j}}}\Bigg),\\
\zeta_{q,M,N}^{\ds,\dagger}(\boldsymbol{k}) & :=\sum_{A\subset\{i\mid k_{i}=1\}}\sum_{\substack{M<n_{1}\le\cdots\le n_{r}<N\\
n_{i}<n_{i+1}\quad(i\notin A)
}
}\left(\prod_{i\in A}\frac{1}{\qq{N-n_{i}}}\right)\left(\prod_{i\notin A}\frac{q^{n_{i}}}{\qq{n_{i}}^{k_{i}}}\right),
\end{align*}
where, in each case, $\boldsymbol{k}$ ranges over the same set as in
the corresponding definition without $M$.
Furthermore, for $l_{1},\dots,l_{r},k_{1},\dots,k_{r}\geq1$, $\epsilon\in\{0,1\}$, and $0\le M<N$, we put
\[
\xi_{q,M,N}^{\epsilon}(l_1,k_1,\dots,l_r,k_r):=\begin{cases}
\zeta_{q,M,N}^{\dagger}(\{\bar{1}\}^{l_{1}-1},k_{1},\dots,\{\bar{1}\}^{l_{r}-1},k_{r}) & \text{if }\epsilon=0\\
\zeta_{q,M,N}^{\ds,\dagger}(\{1\}^{l_{1}-1},k_{1}+1,\dots,\{1\}^{l_{r}-1},k_{r}+1) & \text{if }\epsilon=1.
\end{cases}
\]
Then, we define the generating functions of $\xi_{q,M,N}^{\epsilon}$ as follows.
\begin{definition}
For $N> M \ge 0$, $r\ge 0$ and $\epsilon\in \{0,1\}$, we define the following elements of
\[
\mathbb{Q}[[q,\qq{X_i},\qq{Y_i}\mid i=1,\dots,r]]
\]
by
\begin{align*}
G_{q,M,N}^{\epsilon}(Y_1,X_1,\dots,Y_r,X_r)
&:=
\sum_{\substack{k_1,\dots,k_r\ge 0\\ l_1,\dots,l_r\ge 0}}
\left(\prod_{j=1}^r \qq{Y_j}^{l_j}\qq{X_j}^{k_j}\right)
\xi_{q,M,N}^{\epsilon}(l_1+1,k_1+1,\dots,l_r+1,k_r+1),\\
\widetilde{G}_{q,M,N}^{\epsilon}(Y_1,X_1,\dots,Y_r,X_r)
&:=
\left(\prod_{i=1}^r(\qq{N}-\qq{X_i+Y_i})\right)
G_{q,M,N}^{\epsilon}(Y_1,X_1,\dots,Y_r,X_r).
\end{align*}
\end{definition}

Furthermore, we put $\xi_{q,N}^{\epsilon}(\cdots):=\xi_{q,0,N}^{\epsilon}(\cdots)$, $G_{q,N}^{\epsilon}(\cdots):=G_{q,0,N}^{\epsilon}(\cdots)$, and $\widetilde{G}_{q,N}^{\epsilon}(\cdots):=\widetilde{G}_{q,0,N}^{\epsilon}(\cdots)$.

\begin{remark}\label{rem:genfunc_Gg}
Following \cite{Bachmann_Bibrackets,CombMES}, we recall another model of $q$-MZVs,
\[
g\binom{k_{1},\dots,k_{r}}{d_{1},\dots,d_{r}}:=\sum_{\substack{0<m_{1}<\cdots<m_{r}\\
0<n_{1},\dots,n_{r}
}
}\prod_{j=1}^{r}\frac{n_{j}^{k_{j}-1}m_{j}^{d_{j}}q^{m_{j}n_{j}}}{(k_{j}-1)!}
\]
and its generating function,
\[
\mathfrak{g}\binom{X_{1},\dots,X_{r}}{Y_{1},\dots,Y_{r}}:=\sum_{\substack{k_{1},\dots,k_{r}>0\\
d_{1},\dots,d_{r}\geq 0
}
}g\binom{k_{1},\dots,k_{r}}{d_{1},\dots,d_{r}}\prod_{j=1}^{r}\frac{X_{j}^{k_{j}-1}Y_{j}^{d_{j}}}{d_{j}!}.
\]
Let $G_{q}^{\epsilon}(\cdots):=\lim_{N\to\infty}G_{q,N}^{\epsilon}(\cdots)$. Then $G_{q}^{0}$ and $\mathfrak{g}$ are related by the following identity:
\[
G_{q}^{0}(Y_{1}+\cdots+Y_{r},X_{1},\dots,Y_{r-1}+Y_{r},X_{r-1},Y_{r},X_{r})=\mathfrak{g}\binom{-X_{1}\log q,\dots,-X_{r}\log q}{-Y_{1}\log q,\dots,-Y_{r}\log q}.
\]
\end{remark}

Before stating the main theorem of this subsection, we prepare a lemma.
\begin{lemma}
For $\epsilon\in\{0,1\}$, $N>M>0$, $r\geq1$ and $\bX=(Y_{1},X_{1},\dots,Y_{r},X_{r})$,
we have
\begin{equation}
\begin{split}\frac{\qq{N-M}-\qq{Y_{1}}}{\qq{N-M}}G_{q,M-1,N}^{\epsilon}(\bX) & =\left(1+\frac{q^{M}\qq{Y_1}}{\qq{M}}\right)^{\epsilon}G_{q,M,N}^{\epsilon}(\bX)\\
 & \quad+\frac{q^{M}}{\qq{M}^{\epsilon}}\frac{1}{\qq{M}-\qq{X_{1}}}G_{q,M,N}^{\epsilon}(\bX_{\setminus\{1,2\}}).
\end{split}
\label{eq:G_diff}
\end{equation}
\end{lemma}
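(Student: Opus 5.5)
The plan is to split the sum defining $\xi_{q,M-1,N}^{\epsilon}(l_1,k_1,\dots)$ according to which summation variables of the first block take the smallest allowed value $n=M$. Then we resum over $l_1,k_1$ inside the generating function. Since $M-1<n$ means $n\ge M$, every term of $G_{q,M-1,N}^{\epsilon}$ either has some initial variables equal to $M$ or lies entirely in the range $n>M$. By monotonicity, the variables equal to $M$ form an initial segment $n_1=\dots=n_a=M$. The ordering constraints ($\le$ after a $\bar1$ or an $A$-entry, $<$ after any other entry) determine which segments are possible. All identities are identities of formal power series in $\qq{Y_1},\qq{X_1},\dots$, so geometric series in $\qq{Y_1}/\qq{N-M}$ and $\qq{X_1}/\qq{M}$ may be expanded freely.

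\emph{Case $\epsilon=0$.} The first block is $(\{\bar1\}^{l_1-1},k_1)$. An initial segment at $M$ consists of $a$ entries $\bar1$, each contributing $1/\qq{N-M}$, with $0\le a\le l_1-1$. It is optionally followed by the entry $k_1$ at $M$, which contributes $q^M/\qq{M}^{k_1}$.

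If $k_1$ is not at $M$, the remaining $l_1-1-a$ entries $\bar1$ and $k_1$ lie in $(M,N)$. Summing over $a$ with weight $\qq{Y_1}^{l_1-1}$ and reindexing $l_1-1-a\mapsto l_1-1$ gives
\[
\sum_{a\ge0}\Bigl(\frac{\qq{Y_1}}{\qq{N-M}}\Bigr)^{a}G^{0}_{q,M,N}(\bX)=\frac{\qq{N-M}}{\qq{N-M}-\qq{Y_1}}\,G^{0}_{q,M,N}(\bX).
\]
If $k_1$ is at $M$, then all of block $1$ sits at $M$. The next block starts strictly above $M$, because the inequality following a non-$\bar1$ entry is strict. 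Summing $\sum_{k\ge0}\qq{X_1}^{k}q^M/\qq{M}^{k+1}=q^M/(\qq{M}-\qq{X_1})$, together with the same geometric factor in $a$, yields
\[
\frac{\qq{N-M}}{\qq{N-M}-\qq{Y_1}}\cdot\frac{q^M}{\qq{M}-\qq{X_1}}\,G^0_{q,M,N}(\bX_{\setminus\{1,2\}}).
\]
Multiplying both pieces by $(\qq{N-M}-\qq{Y_1})/\qq{N-M}$ gives \eqref{eq:G_diff} for $\epsilon=0$.

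\emph{Case $\epsilon=1$.} The first block is $(\{1\}^{l_1-1},k_1+1)$, and each entry $1$ is either in $A$ (weight $1/\qq{N-n}$, followed by $\le$) or not in $A$ (weight $q^n/\qq{n}$, followed by $<$). An initial segment at $M$ therefore has the form: $a$ entries $1$ in $A$, then one of three endings.
\begin{itemize}
\item Nothing further: this contributes $\frac{\qq{N-M}}{\qq{N-M}-\qq{Y_1}}G^1_{q,M,N}(\bX)$.
\item One entry $1$ not in $A$ at $M$: this consumes one more power of $\qq{Y_1}$ and gives a factor $q^M\qq{Y_1}/\qq{M}$, with everything after it $>M$.
\item The entry $k_1+1$ at $M$: this gives $\sum_k\qq{X_1}^kq^M/\qq{M}^{k+2}=\frac{q^M}{\qq{M}}\frac{1}{\qq{M}-\qq{X_1}}$, with the remaining blocks $>M$.
\end{itemize}
The first two endings combine to the factor $\bigl(1+q^M\qq{Y_1}/\qq{M}\bigr)G^1_{q,M,N}(\bX)$, and the third gives the $\bX_{\setminus\{1,2\}}$ term. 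After multiplying by $(\qq{N-M}-\qq{Y_1})/\qq{N-M}$, this is exactly \eqref{eq:G_diff} with $\epsilon=1$.

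The main obstacle is bookkeeping rather than analysis. We must check that the decomposition of initial segments at $M$ is exhaustive and disjoint, in particular that at most one non-$A$ entry can equal $M$ and that it must terminate the segment. We must also check that the reindexing in $l_1$ correctly shifts the exponent of $\qq{Y_1}$, including the boundary cases where the whole first block, or its $\{1\}^{l_1-1}$ part, is used up.
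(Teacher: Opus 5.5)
Your proof is correct and is essentially the paper's argument. Both split the sum for $G^{\epsilon}_{q,M-1,N}$ according to which leading variables equal $M$, then resum in $\qq{Y_1}$ and $\qq{X_1}$. The only difference is that the paper peels off just the first variable $n_1=M$, which gives a relation with $G^{\epsilon}_{q,M-1,N}$ on both sides that it then rearranges; you instead remove the whole initial run at $M$ and sum the resulting geometric series in $\qq{Y_1}/\qq{N-M}$ explicitly.
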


\begin{proof}
By definition, we have
\begin{align*}
 & \xi_{q,M-1,N}^{\epsilon}(l_{1},k_{1},\dots,l_{r},k_{r})-\xi_{q,M,N}^{\epsilon}(l_{1},k_{1},\dots,l_{r},k_{r})\\
 & =\begin{cases}
\frac{1}{\qq{N-M}}\xi_{q,M-1,N}^{\epsilon}(l_{1}-1,k_{1},\dots,l_{r},k_{r})+\frac{\epsilon q^{M}}{\qq{M}}\xi_{q,M,N}^{\epsilon}(l_{1}-1,k_{1},\dots,l_{r},k_{r}) & l_{1}>1\\
\frac{q^{M}}{\qq{M}^{k_{1}+\epsilon}}\xi_{q,M,N}^{\epsilon}(l_{2},k_{2},\dots,l_{r},k_{r}) & l_{1}=1,
\end{cases}
\end{align*}
and thus,
\begin{align*}
G_{q,M-1,N}^{\epsilon}(\bX)-G_{q,M,N}^{\epsilon}(\bX) & =\frac{\qq{Y_{1}}}{\qq{N-M}}G_{q,M-1,N}^{\epsilon}(\bX)+\frac{\epsilon q^{M}\qq{Y_{1}}}{\qq{M}}G_{q,M,N}^{\epsilon}(\bX)\\
 & \quad+\frac{q^{M}}{\qq{M}^{\epsilon}}\frac{1}{\qq{M}-\qq{X_{1}}}G_{q,M,N}^{\epsilon}(\bX_{\setminus\{1,2\}}).
\end{align*}
This implies the lemma.
\end{proof}

The following is the main theorem of this subsection.
\begin{theorem}\label{thm:recur_for_gen_withM}
For $\epsilon\in\{0,1\}$, $N>M\geq0$, $r\geq 0$ and $\bX=(Y_{1},X_{1},\dots,Y_{r},X_{r})$,
we have
\begin{align*}
 & \frac{\prod_{i=0}^{r}(\qq{N}-\qq{X_{i}+Y_{i+1}})}{\qq{N}-\qq{M}}G_{q,M,N+1}^{\epsilon}(\bX)\\
 & =\sum_{T\in\mathcal{T}(r)}(-1)^{\mathrm{eo}(T)}\left(\frac{q^{N}}{\qq{N}^{\epsilon}}\right)^{\kappa(T)}\widetilde{G}_{q,M,N}^{\epsilon}(\bX_{\setminus T}),
\end{align*}
where we understand that $X_{0}:=M$ and $Y_{r+1}:=0$.
\end{theorem}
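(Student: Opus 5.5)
Throughout, I read $\qq{X_i+Y_{i+1}}$ as $1-q^{X_i+Y_{i+1}}$ with $X_0:=M$ (a number) and $Y_{r+1}:=0$. The factor indexed by $i=0$ is $\qq{N}-\qq{M+Y_1}$, and the factor indexed by $i=r$ is $\qq{N}-\qq{X_r}$.

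I would argue by \emph{descending} induction on $M$, from $M=N$ down to $M=0$, with an inner induction on $r$. In the base case $M=N$ the left side should be read through the limit or the convention $\xi_{q,N,N+1}$, which only involves the point $n=N$. A cleaner alternative is to take $M=N-1$ as the base, where $G_{q,N-1,N}^\epsilon=\delta_{r,0}$. Then $G_{q,N-1,N+1}^\epsilon$ involves only indices $n_j=N$, and it can be computed directly as a product of geometric-type series in $\qq{Y_j}/\qq{1}$ and $\qq{X_j}/\qq{N}$. The case $r=0$ is trivial, since both sides equal $(\qq{N}-\qq{M+Y_1})/(\qq N-\qq M)$ with $Y_1=0$, i.e.\ $1$.

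The induction step uses \eqref{eq:G_diff}. I apply it twice, once with $N+1$ in place of $N$ and once with $N$. This expresses $G^\epsilon_{q,M-1,N+1}(\bX)$ via $G^\epsilon_{q,M,N+1}(\bX)$ and $G^\epsilon_{q,M,N+1}(\bX_{\setminus\{1,2\}})$. Likewise it expresses $\widetilde G^\epsilon_{q,M-1,N}(\bX_{\setminus T})$ via level-$M$ quantities. Each $T\in\mathcal T(r)$ either removes the pair $\{1,2\}$ or not. If not, the first variable of $\bX_{\setminus T}$ is still $Y_1$ (unless the even--odd domino $\{2,3\}$ lies in $T$, which merges $Y_1$ with $X_2$'s successor). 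So I split the sum over $\mathcal T(r)$ according to how $T$ meets $\{1,2,3\}$:
\begin{enumerate}
\item $T\cap\{1,2,3\}=\emptyset$;
\item $\{1,2\}\subset T$;
\item $\{2,3\}\subset T$.
\end{enumerate}
The first two cases reduce to $\mathcal T(r)$ and $\mathcal T(r-1)$ structures, using $\bX_{\setminus\{1,2\}}$. The third produces the merged variable, which is exactly where the sign $(-1)^{\mathrm{eo}(T)}$ and the constraint that odd--even dominoes are not adjacent (the definition of $\mathrm{OE}^*$) should enter.

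Substituting the induction hypothesis at level $M$, for both $\bX$ and $\bX_{\setminus\{1,2\}}$, gives a candidate right side at level $M-1$. The proof then reduces to a rational-function identity in $\qq{N}$, $\qq{M}$, $\qq{N-M}$, $\qq{Y_1}$, $\qq{X_1}$, $q^M$ and $q^N$. The key algebraic facts I expect to use are:
\begin{itemize}
\item $\qq{a}-\qq{b}=q^b\qq{a-b}$;
\item $\qq{N}-\qq{M}=q^M\qq{N-M}$;
\item $\qq{N+1}$-type relations such as $\qq{N}-\qq{M+Y_1}=q^{M}(\qq{N-M}-\qq{Y_1})$, which matches the factor in \eqref{eq:G_diff} and is the reason for the normalisation by $\qq N-\qq M$.
\end{itemize}
The factor $(1+q^M\qq{Y_1}/\qq M)^\epsilon$ should combine with $(\qq{M}-\qq{M+Y_1})/\qq M$ to give the unified treatment of both $\epsilon$.

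The main obstacle is the bookkeeping step: showing that the terms coming from $\bX_{\setminus\{1,2\}}$ on the two sides match, including the $q^N/\qq N^\epsilon$ weights attached to dominoes. I would first verify the identity for $r=1$ and $r=2$ by hand to pin down the cancellation pattern. Then I would organise the general case as a telescoping decomposition of $\mathcal T(r)$ by the status of positions $1,2,3$, using $\mathcal T(r)\leftrightarrow\{\emptyset,\{1,2\},\{2,3\}\}\times$ (a tiling of the remaining positions).
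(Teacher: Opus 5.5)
Your skeleton matches the paper's: descending induction on $M$ from the base $M=N-1$, with \eqref{eq:G_diff} applied at level $N+1$ to the left side and at level $N$ to every $\widetilde G^{\epsilon}_{q,M-1,N}(\bX_{\setminus T})$. The base case is indeed easy: both sides vanish for $r\ge 2$ and equal $q^N/\qq{N}^{\epsilon}$ for $r=1$. But the step you defer as ``bookkeeping'' is the entire content of the proof, and your proposal lacks the idea that makes it work. Put
\[
A_M(Y)=\frac{\qq{N-M}}{\qq{N-M}-\qq{Y}}\Bigl(1+\frac{q^M\qq{Y}}{\qq{M}}\Bigr)^{\epsilon},\qquad
B_M(X,Y)=\frac{q^M}{\qq{M}^{\epsilon}}\,\frac{\qq{N-M}}{\qq{N-M}-\qq{Y}}\,\frac{\qq{N}-\qq{X+Y}}{\qq{M}-\qq{X}}.
\]
Level $N+1$ gives, for the left side $L_M(\bX)$, the recursion $L_{M-1}(\bX)=A_M(Y_1)L_M(\bX)+B_M(X_1,Y_2)L_M(\bX_{\setminus\{1,2\}})$; note that $Y_2$ already appears. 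Level $N$ gives, for $\boldsymbol{Z}=\bX_{\setminus T}$,
\[
\widetilde G^{\epsilon}_{q,M-1,N}(\boldsymbol{Z})=A_M(Z_1)\widetilde G^{\epsilon}_{q,M,N}(\boldsymbol{Z})+B_M(Z_2,Z_1)\widetilde G^{\epsilon}_{q,M,N}(\boldsymbol{Z}_{\setminus\{1,2\}}).
\]
So on the right, $B_M$ is evaluated at $(X_1,Y_1)$, $(X_2,Y_1)$, $(X_2,Y_2)$, \dots, and $A_M$ at $Y_2$ as well as $Y_1$. You must show that the right side satisfies the same recursion as $L_M$. This hinges on the single identity
\[
B_M(X,Y)-B_M(X,Y')=\frac{q^N}{\qq{N}^{\epsilon}}\bigl(A_M(Y)-A_M(Y')\bigr),
\]
i.e.\ $B_M(X,Y)-\frac{q^N}{\qq{N}^{\epsilon}}A_M(Y)$ is independent of $Y$. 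This is exactly what the domino weight $q^N/\qq{N}^{\epsilon}$ and the sign $(-1)^{\mathrm{eo}(T)}$ are calibrated to. Nothing in your list of facts points to it. Nor is the needed identity a rational-function identity in $\qq{Y_1},\qq{X_1}$ alone, as you anticipate.

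Second, the cancellations are not local to positions $1,2,3$. The pair removed by the $B_M$-term is the first two positions not covered by $T$. If $T\supset\{2,3\}\cup\{4,5\}\cup\dots\cup\{2\eta-2,2\eta-1\}$ and $1\notin T$, that pair is $\{1,2\eta\}$, giving $B_M(X_\eta,Y_1)$ for arbitrarily large $\eta$. The paper therefore regroups by the total deleted set $S$ (equal to $T$ for the $A_M$-term, and to $T$ plus the removed pair for the $B_M$-term) and by $\eta(S)=\max\{i\mid [2i]\subset S\}$. Each of the following cases is settled by the identity above:
\begin{itemize}
\item for $\eta(S)\ge 3$, the four contributions with $B_M(X_\eta,Y_1)$, $B_M(X_{\eta-1},Y_1)$, $B_M(X_\eta,Y_2)$, $B_M(X_{\eta-1},Y_2)$ cancel;
\item for $\eta(S)=2$, the three contributions from $T=S\setminus\{1,2\}$, $S\setminus\{1,4\}$, $S\setminus\{3,4\}$ collapse to the single $B_M(X_1,Y_2)$-term;
\item for $\eta(S)=1$, the terms $A_M(Y_2)$ and $B_M(X_1,Y_1)$ rearrange into $A_M(Y_1)$ and $B_M(X_1,Y_2)$.
\end{itemize}
Your split by $T\cap\{1,2,3\}$ cannot organize this. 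As written it is not even exhaustive: it omits $T\supset\{3,4\}$ (already $T=\{3,4\}\in\mathcal{T}(2)$). That is precisely the tiling supplying the $B_M(X_1,Y_1)$ term needed in the $\eta(S)=2$ cancellation.
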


\begin{corollary}
\label{cor:recur_for_gen}
For $\epsilon\in\{0,1\}$, $N>0$, $r\geq0$ and $\bX=(Y_{1},X_{1},\dots,Y_{r},X_{r})$,
we have
\begin{align*}
 & \frac{\prod^{r}_{i=0}(\qq{N}-\qq{X_{i}+Y_{i+1}})}{\qq{N}}G^{\epsilon}_{q,N+1}(\bX)\\
 & =\sum_{T\in\mathcal{T}(r)}(-1)^{\mathrm{eo}(T)}\left(\frac{q^{N}}{\qq{N}^{\epsilon}}\right)^{\kappa(T)}\widetilde{G}^{\epsilon}_{q,N}(\bX_{\setminus T}),
\end{align*}
where we understand that $X_{0}:=Y_{r+1}:=0$.
\end{corollary}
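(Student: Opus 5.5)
This corollary is the specialization $M=0$ of \cref{thm:recur_for_gen_withM}, so my plan is simply to substitute $M=0$ and check that each ingredient reduces correctly. First, $G_{q,0,N+1}^{\epsilon}=G_{q,N+1}^{\epsilon}$ and $\widetilde{G}_{q,0,N}^{\epsilon}=\widetilde{G}_{q,N}^{\epsilon}$ hold by the notational conventions introduced after the definition of the generating functions. Second, the convention $X_0:=M$ becomes $X_0:=0$, which matches the convention $X_0:=Y_{r+1}:=0$ in the corollary.

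Third, I check the denominator. We have $\qq{0}=1-q^0=0$, so $\qq{N}-\qq{M}=\qq{N}$ when $M=0$. This gives the stated left-hand side.

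I also need the hypothesis $N>M\ge 0$ of the theorem. With $M=0$ it reads $N>0$, which is exactly the corollary's hypothesis. The sum over $T\in\mathcal{T}(r)$, the signs $(-1)^{\mathrm{eo}(T)}$, and the factors $(q^N/\qq{N}^{\epsilon})^{\kappa(T)}$ do not depend on $M$, so the right-hand side transfers verbatim.

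I expect no real obstacle here. The only point needing care is to confirm that the factor with $i=0$ on the left, $\qq{N}-\qq{X_0+Y_1}=\qq{N}-\qq{Y_1}$, is interpreted consistently: it must use $X_0=0$, not a symbolic $X_0$. With that checked, the corollary follows immediately from \cref{thm:recur_for_gen_withM}.
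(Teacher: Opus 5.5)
Your proposal is correct and is exactly the paper's argument: the paper obtains \cref{cor:recur_for_gen} as the special case $M=0$ of \cref{thm:recur_for_gen_withM}. Your checks are precisely what that specialization needs: $\qq{0}=0$ gives $\qq{N}-\qq{M}=\qq{N}$, the convention $X_0:=M$ becomes $X_0:=0$, and the hypothesis $N>M\ge 0$ becomes $N>0$.
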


\begin{proof}[Proof of \Cref{thm:recur_for_gen_withM} and \Cref{cor:recur_for_gen}.]
We fix $\epsilon\in\{0,1\}$ and $N>0$. The case $r=0$ is obvious
since both sides are equal to $1$. We assume $r\geq1$. Let $L_{M}(\bX)$
(resp.\ $R_{M}(\bX)$) be the left-hand (resp.\ right-hand) side of
\Cref{thm:recur_for_gen_withM}. Put
\begin{align*}
A_{M}(Y) & :=\frac{\qq{N-M}}{\qq{N-M}-\qq Y}\left(1+\frac{q^{M}\qq Y}{\qq{M}}\right)^{\epsilon}\\
B_{M}(X,Y) & :=\frac{q^{M}}{\qq{M}^{\epsilon}}\frac{\qq{N-M}}{\qq{N-M}-\qq Y}\frac{\qq{N}-\qq{X+Y}}{\qq{M}-\qq X}.
\end{align*}
Note that they satisfy
\begin{equation}
B_{M}(X,Y)-B_{M}(X,Y')=\frac{q^{N}}{\qq{N}^{\epsilon}}\left(A_{M}(Y)-A_{M}(Y')\right).\label{eq:B_diff}
\end{equation}

By replacing $N$ by $N+1$ in  \eqref{eq:G_diff} and multiplying both sides by
\[
(\qq{N}-\qq{X_{r}})\prod^{r-1}_{i=1}(\qq{N}-\qq{X_{i}+Y_{i+1}}),
\]
we get
\begin{equation}
L_{M-1}(\bX)=A_{M}(Y_{1})L_{M}(\bX)+B_{M}(X_1,Y_2)L_{M}(\bX_{\setminus\{1,2\}}).\label{eq:L_recur}
\end{equation}

Next, we show
\begin{equation}
R_{M-1}(\bX)=A_{M}(Y_{1})R_{M}(\bX)+B_{M}(X_{1},Y_{2})R_{M}(\bX_{\setminus\{1,2\}}).\label{eq:R_recur}
\end{equation}
Multiplying both sides of \eqref{eq:G_diff} by
\[
\frac{\qq{N-M}}{\qq{N-M}-\qq{Y_{1}}}\prod^{r}_{i=1}(\qq{N}-\qq{X_{i}+Y_{i}}),
\]
we have
\begin{equation}
\widetilde{G}^{\epsilon}_{q,M-1,N}(\bX)=A_{M}(\bX^{[1]})\widetilde{G}^{\epsilon}_{q,M,N}(\bX)+B_{M}(\bX^{[2]},\bX^{[1]})\widetilde{G}^{\epsilon}_{q,M,N}(\bX_{\setminus\{1,2\}})\label{eq:G_tilde_recur}
\end{equation}
where $\bX^{[1]}$ (resp.\ $\bX^{[2]}$) denotes the first (resp.\ second)
component of $\bX$. The case $r=1$ of \eqref{eq:R_recur} follows
from
\begin{align*}
R_{M-1}(Y_{1},X_{1}) & =A_{M}(Y_{1})R_{M}(Y_{1},X_{1})-\frac{q^{N}}{\qq{N}^{\epsilon}}A_{M}(Y_{1})+B_{M}(X_{1},Y_{1})+\frac{q^{N}}{\qq{N}^{\epsilon}}\qquad(\text{by }\eqref{eq:G_tilde_recur})\\
 & =A_{M}(Y_{1})R_{M}(Y_{1},X_{1})+B_{M}(X_{1},0)R_{M}(\emptyset)\qquad(\text{by }\eqref{eq:B_diff}).
\end{align*}
Assume that $r\geq2$. Put $c(T):=(-1)^{\mathrm{eo}(T)}\left(\frac{q^{N}}{\qq{N}^{\epsilon}}\right)^{\kappa(T)}$.
Then, by \eqref{eq:G_tilde_recur},
\begin{align*}
R_{M-1}(\bX) & =\sum_{T\in\mathcal{T}(r)}c(T)\widetilde{G}^{\epsilon}_{q,M-1,N}(\bX_{\setminus T})\\
 & =\sum_{T\in\mathcal{T}(r)}c(T)A_{M}((\bX_{\setminus T})^{[1]})\widetilde{G}^{\epsilon}_{q,M,N}(\bX_{\setminus T})\\
 & \quad+\sum_{T\in\mathcal{T}(r)}c(T)B_{M}((\bX_{\setminus T})^{[2]},(\bX_{\setminus T})^{[1]})\widetilde{G}^{\epsilon}_{q,M,N}((\bX_{\setminus T})_{\setminus\{1,2\}})\\
 & =\sum_{S}f(S)\widetilde{G}^{\epsilon}_{q,M,N}(\bX_{\setminus S})
\end{align*}
where
\[
f(S):=\sum_{\substack{T\in\mathcal{T}(r)\\
T=S
}
}c(T)A_{M}((\bX_{\setminus T})^{[1]})+\sum_{\substack{T\in\mathcal{T}(r)\\
S=T\cup\{m^{(1)}_{T},m^{(2)}_{T}\}
}
}c(T)B_{M}(\bX^{[m^{(2)}_{T}]},\bX^{[m^{(1)}_{T}]})
\]
where $m^{(1)}_{T}$ (resp.\ $m^{(2)}_{T}$) is the smallest (resp.\
second smallest) element of $[2r]\setminus T$. For $S\subset[2r]$,
we put
\[
\eta(S):=\max(\{\,i\,\mid\,[2i]\subset S\}).
\]
Then $f(S)=0$ except in the following cases: either $\eta(S)=0$ and $S\in\mathcal{T}(r)$,
or $\eta(S)\geq1$ and $S\setminus\{1,2\eta(S)\}\in\mathcal{T}(r)$.
For such cases, $f(S)$ is calculated as follows.
If $\eta(S)=0$, then
\[
f(S)=c(S)A_{M}(Y_{1}).
\]
If $\eta(S)=1$, then
\begin{align*}
f(S) & =c(S)A_{M}(Y_{2})+c(S\setminus\{1,2\})B_{M}(X_{1},Y_{1})\\
 & =c(S)A_{M}(Y_{1})+c(S\setminus\{1,2\})B_{M}(X_{1},Y_{2})\qquad(\text{by }\eqref{eq:B_diff}).
\end{align*}
If $\eta(S)=2$, then
\begin{align*}
f(S) & =c(S\setminus\{1,2\})B_{M}(X_{1},Y_{1})+c(S\setminus\{1,4\})B_{M}(X_{2},Y_{1})+c(S\setminus\{3,4\})B_{M}(X_{2},Y_{2})\\
 & =c(S\setminus\{1,2\})B_{M}(X_{1},Y_{2})\qquad(\text{by }\eqref{eq:B_diff}).
\end{align*}
If $\eta(S)\geq 3$, then, 
\begin{align*}
f(S) & =c(S\setminus\{1,2\eta(S)\})B_{M}(X_{\eta(S)},Y_{1})+c(S\setminus\{1,2\eta(S)-2\})B_{M}(X_{\eta(S)-1},Y_{1})\\
 & \quad+c(S\setminus\{3,2\eta(S)\})B_{M}(X_{\eta(S)},Y_{2})+c(S\setminus\{3,2\eta(S)-2\})B_{M}(X_{\eta(S)-1},Y_{2})\\
 & =0\qquad(\text{by }\eqref{eq:B_diff}).
\end{align*}
Thus,
\begin{align*}
\sum_{\eta(S)=0}f(S)\widetilde{G}^{\epsilon}_{q,M,N}(\bX_{\setminus S}) & =A_{M}(Y_{1})\sum_{\substack{T\in\mathcal{T}(r)\\
\{1,2\}\not\subset T
}
}c(T)\widetilde{G}^{\epsilon}_{q,M,N}(\bX_{\setminus T}),\\
\sum_{\eta(S)=1}f(S)\widetilde{G}^{\epsilon}_{q,M,N}(\bX_{\setminus S}) & =A_{M}(Y_{1})\sum_{\substack{T\in\mathcal{T}(r)\\
\{1,2\}\subset T
}
}c(T)\widetilde{G}^{\epsilon}_{q,M,N}(\bX_{\setminus T})\\
& \quad +B_{M}(X_{1},Y_{2})\sum_{\substack{T\in\mathcal{T}(r-1)\\
\{1,2\}\not\subset T
}
}c(T)\widetilde{G}^{\epsilon}_{q,M,N}((\bX_{\setminus\{1,2\}})_{\setminus T}),\\
\sum_{\eta(S)=2}f(S)\widetilde{G}^{\epsilon}_{q,M,N}(\bX_{\setminus S}) & =B_{M}(X_{1},Y_{2})\sum_{\substack{T\in\mathcal{T}(r-1)\\
\{1,2\}\subset T
}
}c(T)\widetilde{G}^{\epsilon}_{q,M,N}((\bX_{\setminus\{1,2\}})_{\setminus T}),\\
\sum_{\eta(S)\geq3}f(S)\widetilde{G}^{\epsilon}_{q,M,N}(\bX_{\setminus S}) & =0.
\end{align*}
Thus,
\begin{align*}
R_{M-1}(\bX) & =A_{M}(Y_{1})\sum_{T\in\mathcal{T}(r)}c(T)\widetilde{G}^{\epsilon}_{q,M,N}(\bX_{\setminus T})+B_{M}(X_{1},Y_{2})\sum_{T\in\mathcal{T}(r-1)}c(T)\widetilde{G}^{\epsilon}_{q,M,N}((\bX_{\setminus\{1,2\}})_{\setminus T})\\
 & =A_{M}(Y_{1})R_{M}(\bX)+B_{M}(X_{1},Y_{2})R_{M}(\bX_{\setminus\{1,2\}}),
\end{align*}
which completes the proof of \eqref{eq:R_recur}.

Now \Cref{thm:recur_for_gen_withM} follows from induction on $N-M$ by
\eqref{eq:L_recur}, \eqref{eq:R_recur}, and an easy base case $L_{N-1}(\bX)=R_{N-1}(\bX)$.
\Cref{cor:recur_for_gen} is a special case $M=0$ of \Cref{thm:recur_for_gen_withM}.
\end{proof}

\subsection{Explicit formulas and a proof of the main theorem}
We can now define the words that encode the explicit expansions of the truncated $q$-MZVs.
\begin{definition}\label{def:Eq}
For $\boldsymbol{c}=(c_{1},\dots,c_{2r})\in\mathbb{Z}_{\ge1}^{2r}$,
define $\mathfrak{E}_{q}^{0}(\boldsymbol{c})\in\h^{1}$ and $\mathfrak{E}_{q}^{1}(\boldsymbol{c})\in\h^{\geq2}$
recursively by $\mathfrak{E}_{q}^{0}(\emptyset)=\mathfrak{E}_{q}^{1}(\emptyset)=1$
and, for $r\ge1$ and $\epsilon\in\{0,1\}$, 
\begin{align*}
\mathfrak{E}_{q}^{\epsilon}(\boldsymbol{c}) & =-\sum_{\emptyset\neq B\subset[2r]_{\boldsymbol{c}}^{>1}}(-1)^{\#B}\mathfrak{E}_{q}^{\epsilon}(\boldsymbol{c}_{\emptyset,B})x^{\alpha(B)}\\
 & \quad+\sum_{\emptyset\neq B\subset[2r]_{\boldsymbol{c}}^{>1}}(-1)^{\#B}\mathfrak{E}_{q}^{\epsilon}(\boldsymbol{c}_{\emptyset,B})\left(\sum_{h=\alpha(B)+1}^{\beta(B)}yx^{h-1}\right)\\
 & \quad+\sum_{\substack{\emptyset\neq A\subset[2r]_{\boldsymbol{c}}^{1},\ A\in\mathcal{T}(r)\\
B\subset[2r]_{\boldsymbol{c}}^{>1}
}
}\sum_{h=1}^{\kappa(A)}(-1)^{\mathrm{eo}(A)+\#B+\kappa(A)-h}\binom{\kappa(A)-1}{h-1}\mathfrak{E}_{q}^{\epsilon}(\boldsymbol{c}_{A,B})yx^{h+\epsilon\kappa(A)+\beta_{A}(B)-1}.
\end{align*}
When $\alpha(B)>\beta(B)$, we interpret 
\[
\sum_{h=\alpha(B)+1}^{\beta(B)}yx^{h-1}:=-\sum_{h=\beta(B)+1}^{\alpha(B)}yx^{h-1}.
\]
Furthermore, define $\mathfrak{E}_{q}(\boldsymbol{c})\in\h^{1}$ and
$\mathfrak{D}_{q}(\boldsymbol{c})\in\h^{\geq2}$ by
\[
\mathfrak{E}_{q}(\boldsymbol{c}):=\mathfrak{E}_{q}^{0}(\boldsymbol{c}),\qquad\mathfrak{D}_{q}(\boldsymbol{c}):=(-1)^{c_{1}+\cdots+c_{2r}}\theta(\mathfrak{E}_{q}^{1}(\boldsymbol{c}))
\]
where $\theta:\h\to\h$ is the $\mathbb{Z}$-algebra automorphism defined by $\theta(x)=-x$, $\theta(y)=-y$. 
\end{definition}

By definition, the following theorem is equivalent to \cref{thm:main_finite} in the introduction.

\begin{theorem}
For $N\ge 1$, $r\ge0$, $\epsilon\in\{0,1\}$, and $\boldsymbol{k}=(k_{1},\dots,k_{r})$, $\boldsymbol{l}=(l_{1},\dots,l_{r})\in\mathbb{Z}_{\ge1}^{r}$,
we have 
\[
\xi_{q,N}^{\epsilon}(l_1,k_1,\dots,l_r,k_r)=Z_{q,N}^{\dagger}\bigl(\mathfrak{E}_{q}^{\epsilon}(l_{1},k_{1},\dots,l_{r},k_{r})\bigr).
\]
\end{theorem}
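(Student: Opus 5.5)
We argue by induction on $N$, and for fixed $N$ on the weight $\sum_i c_i$ of $\boldsymbol{c}=(l_1,k_1,\dots,l_r,k_r)$. The tool is \cref{cor:recur_for_gen}, which expresses $G^{\epsilon}_{q,N+1}$ through the $\widetilde{G}^{\epsilon}_{q,N}(\bX_{\setminus T})$.

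\textbf{Base case.} For $N=1$ both sides vanish when $r\ge1$. On the right this holds because every word in $\mathfrak{E}^\epsilon_q(\boldsymbol{c})$ with $r\ge 1$ has positive length; this is visible from the recursion, since each term ends in $x^{\alpha}$ applied to a shorter word or ends in a letter $y$. For $r=0$ both sides equal $1$.

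\textbf{Step 1: a coefficientwise recurrence for the left-hand side.} Expand both sides of \cref{cor:recur_for_gen} in the variables $\qq{X_i},\qq{Y_i}$. Use
\[
\qq{N}-\qq{X+Y}=q^N-q^{N}\cdot q^{X+Y}-\qq{N}+\qq{N},
\]
that is, rewrite $\qq{N}-\qq{X_i+Y_{i+1}}$ as $q^N(q^{X_i+Y_{i+1}}-1)$, and similarly for the factors in $\widetilde{G}$. Each such factor is $-q^N$ times a linear combination of $\qq{X_i}$, $\qq{Y_{i+1}}$ and $\qq{X_i}\qq{Y_{i+1}}$, by $1-q^{a+b}=\qq a+\qq b-\qq a\qq b$. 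The left-hand side then contains $\prod_{i=0}^r(\cdots)$, which couples $X_i$ with $Y_{i+1}$, i.e.\ even--odd positions, while $\widetilde G$ couples $Y_i$ with $X_i$, i.e.\ odd--even positions. Comparing the coefficient of $\prod\qq{Y_j}^{l_j-1}\qq{X_j}^{k_j-1}$ gives a relation of the following shape:
\[
\xi^{\epsilon}_{q,N+1}(\boldsymbol{c})=\sum_{B}(\pm)\,\xi^\epsilon_{q,N+1}(\boldsymbol{c}_{\emptyset,B})\cdot(\text{factors in }q^N/\qq N)\;+\;\sum_{A,B}(\pm)\,\xi^\epsilon_{q,N}(\boldsymbol{c}_{A,B})\cdots .
\]
Here lowering the entries in $B$ comes from the $\qq{\cdot}$ monomials in the product factors. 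Lowering a single entry or a whole even--odd pair produces the powers $\alpha(B)$ on the left-hand side and $\beta(B)$ (or $\beta_A(B)$ after deleting $A$) on the right. Deletion of entries equal to $1$ comes from the domino sets $T\in\mathcal{T}(r)$ with weight $(q^N/\qq N^\epsilon)^{\kappa(T)}$. Only $T\subset[2r]^1_{\boldsymbol{c}}$ survive, because deleting a variable from $\bX$ picks out the constant term in it, i.e.\ exponent $c_i-1=0$. The $h$-sum with the binomial $\binom{\kappa(A)-1}{h-1}$ should arise from expanding $(q^N/\qq N)^{\kappa}$ into the basis $q^N/\qq N^{h}$.

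\textbf{Step 2: a matching recurrence for the right-hand side.} On the word side, use the elementary identity
\[
Z^\dagger_{q,N+1}(w\,yx^{m-1})=Z^\dagger_{q,N}(w\,yx^{m-1})+Z^\dagger_{q,N}(w)\frac{q^N}{\qq N^m},
\]
valid for $w\in\h^1$. This shows that $Z^\dagger_{q,N+1}$ of each recursive term of \cref{def:Eq} matches the corresponding term of Step 1. Concretely:
\begin{itemize}
\item the terms $-\mathfrak{E}^\epsilon_q(\boldsymbol{c}_{\emptyset,B})x^{\alpha(B)}$ reproduce the moves of the left-hand product to the other side;
\item the $yx^{h-1}$ sums with $\alpha(B)<h\le\beta(B)$ encode the telescoping difference $(q^N/\qq N)(\qq N^{-\alpha}-\qq N^{-\beta})$-type corrections;
\item the $A$-terms reproduce the domino contributions.
\end{itemize}
Since every $\boldsymbol{c}_{\emptyset,B}$ with $B\ne\emptyset$ and every $\boldsymbol{c}_{A,B}$ with $A\neq\emptyset$ has smaller weight, the induction hypotheses at level $N$ and at level $N+1$ with smaller weight give the claim. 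Here $\boldsymbol{c}_{A,B}$ is reindexed by $\sigma_A$, which is exactly why $\beta_A$ appears. For $\epsilon=1$ the extra $\qq N^{-\epsilon\kappa(A)}$ shifts the exponent by $\epsilon\kappa(A)$, matching $yx^{h+\epsilon\kappa(A)+\beta_A(B)-1}$.

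\textbf{Main obstacle.} The hard part is the bookkeeping in Step 1. One has to check that after clearing the prefactor $\qq N^{-1}\prod(\qq N-\qq{X_i+Y_{i+1}})$, the signs $(-1)^{\#B}$, $(-1)^{\mathrm{eo}(A)}$ and the exponents $\alpha(B)$ and $\beta_A(B)$ come out exactly as in \cref{def:Eq}, including the reversed-interval convention when $\alpha(B)>\beta(B)$. I would first verify the cases $r=1$ and $r=2$ by hand to fix the normalizations. I would then treat general $r$ by factorizing over the maximal blocks of consecutive positions, since both products factor over dominoes.
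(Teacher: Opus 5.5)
Your strategy is the paper's: extract the coefficient of $\prod_i\qq{W_i}^{c_i-1}$ from \cref{cor:recur_for_gen}, solve for the top term, and match the result with \cref{def:Eq} through forward differences in $N$. Your induction order (outer on $N$, inner on weight) is a valid alternative to the paper's order (outer on weight, then telescoping from $N=1$).

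However, the one concrete computation you give in Step 1 is false, and as written it breaks the step. In fact
\[
\qq{N}-\qq{X+Y}=q^{X+Y}-q^{N}=\qq{N}-\qq{X}-\qq{Y}+\qq{X}\qq{Y},
\]
not $q^N(q^{X+Y}-1)$. The constant term $\qq{N}$ is essential. If each factor were $-q^N$ times a combination with no constant term, every block would have to be lowered. Then there would be no $B=\emptyset$ term, so $\xi^{\epsilon}_{q,N+1}(\boldsymbol c)$ would never appear. The weights would also be powers of $q^N$ rather than $\qq{N}^{-\alpha(B)}$. So the recurrence you then write down does not follow from what precedes it. Relatedly, the $B$-coefficients on the left are powers of $\qq{N}^{-1}$, not of $q^N/\qq{N}$.

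With the correct expansion, the bookkeeping you defer is short, and it needs neither hand checks for $r=1,2$ nor a factorization over maximal blocks. Divide \cref{cor:recur_for_gen} by $\qq{N}^r$.
\begin{itemize}
\item Each of the $r+1$ left factors becomes $1-\qq{W}/\qq{N}-\qq{W'}/\qq{N}+\qq{W}\qq{W'}/\qq{N}$ on an even--odd block, with the singletons $\{1\}$ and $\{2r\}$ at the ends.
\item Each surviving factor of $\widetilde G^{\epsilon}_{q,N}(\bX_{\setminus A})$ becomes the same expression on an odd--even block after reindexing by $\sigma_A$. This leaves an extra factor $\qq{N}^{-\kappa(A)}$.
\end{itemize}
A block hit once or twice contributes exactly one factor $\qq{N}^{-1}$, with sign $(-1)^{\#B}$. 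Hence
\[
\sum_{B\subset[2r]^{>1}_{\boldsymbol c}}\frac{(-1)^{\#B}}{\qq{N}^{\alpha(B)}}\,\xi^{\epsilon}_{q,N+1}(\boldsymbol c_{\emptyset,B})
=\sum_{\substack{A\subset[2r]^{1}_{\boldsymbol c},\ A\in\mathcal{T}(r)\\ B\subset[2r]^{>1}_{\boldsymbol c}}}(-1)^{\mathrm{eo}(A)+\#B}\frac{q^{N\kappa(A)}}{\qq{N}^{(1+\epsilon)\kappa(A)+\beta_A(B)}}\,\xi^{\epsilon}_{q,N}(\boldsymbol c_{A,B}).
\]
To finish, proceed as follows.
\begin{enumerate}
\item Split $\xi^{\epsilon}_{q,N+1}(\boldsymbol c_{\emptyset,B})=\bigl(\xi^{\epsilon}_{q,N+1}-\xi^{\epsilon}_{q,N}\bigr)(\boldsymbol c_{\emptyset,B})+\xi^{\epsilon}_{q,N}(\boldsymbol c_{\emptyset,B})$.
\item Combine the second piece with the right-hand terms having $A=\emptyset$, using $\qq{N}^{-m}=1+q^N\sum_{h=1}^{m}\qq{N}^{-h}$. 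This gives $\qq{N}^{-\beta}-\qq{N}^{-\alpha}=q^N\sum_{h=\alpha+1}^{\beta}\qq{N}^{-h}$. This identity, not a term of the form $(q^N/\qq{N})(\qq{N}^{-\alpha}-\qq{N}^{-\beta})$, produces the $yx^{h-1}$ terms and the reversed-interval convention.
\item Expand $q^{N\kappa}\qq{N}^{-(1+\epsilon)\kappa}=\sum_{m'=1}^{\kappa}(-1)^{\kappa-m'}\binom{\kappa-1}{m'-1}q^N\qq{N}^{-m'-\epsilon\kappa}$.
\item Recognize each term as $\Delta_N Z^{\dagger}_{q,N}$ of the corresponding term of \cref{def:Eq}. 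Use your identity together with $\Delta_N Z^{\dagger}_{q,N}(ux^k)=\qq{N}^{-k}\Delta_N Z^{\dagger}_{q,N}(u)$ for $u\in y\h$, which follows from yours.
\end{enumerate}
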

\begin{proof}
Divide both sides of \cref{cor:recur_for_gen} by $\qq{N}^{r}$ and
write $(W_{1},\dots,W_{2r})=(Y_{1},X_{1},\dots,Y_{r},X_{r})$. Then
\begin{align*}
 & \left(1-\frac{\qq{W_{1}}}{\qq{N}}\right)\left(1-\frac{\qq{W_{2r}}}{\qq{N}}\right)\left(\prod_{i=1}^{r-1}\left(1-\frac{\qq{W_{2i}}}{\qq{N}}-\frac{\qq{W_{2i+1}}}{\qq{N}}+\frac{\qq{W_{2i}}\qq{W_{2i+1}}}{\qq{N}}\right)\right)G_{q,N+1}^{\epsilon}(W_{1},\dots,W_{2r})\\
 & \qquad=\sum_{A\in\mathcal{T}(r)}\frac{(-1)^{\mathrm{eo}(A)}q^{N\kappa(A)}}{\qq{N}^{(1+\epsilon)\kappa(A)}}\frac{1}{\qq{N}^{r-\kappa(A)}}\widetilde{G}_{q,N}^{\epsilon}((W_{1},\dots,W_{2r})_{\setminus A}).
\end{align*}
For $\boldsymbol{c}=(c_1,\dots,c_{2r})\in\mathbb{Z}_{\geq 1}^{2r}$, we prove the claim
\[
\xi_{q,N}^{\epsilon}(\boldsymbol{c})=Z_{q,N}^{\dagger}(\mathfrak{E}_{q}^{\epsilon}(\boldsymbol{c}))
\]
by induction on $c_{1}+\cdots+c_{2r}$. The case $r=0$ is immediate,
so assume $r\ge1$.

The coefficient of $\qq{W_{1}}^{c_{1}-1}\cdots\qq{W_{2r}}^{c_{2r}-1}$ on
the left-hand side is 
\[
\sum_{B\subset[2r]_{\boldsymbol{c}}^{>1}}\frac{(-1)^{\#B}}{\qq{N}^{\alpha(B)}}\xi_{q,N+1}^{\epsilon}(\boldsymbol{c}_{\emptyset,B})=\xi_{q,N+1}^{\epsilon}(\boldsymbol{c})+\sum_{\emptyset\neq B\subset[2r]_{\boldsymbol{c}}^{>1}}\frac{(-1)^{\#B}}{\qq{N}^{\alpha(B)}}\xi_{q,N+1}^{\epsilon}(\boldsymbol{c}_{\emptyset,B}),
\]
while the coefficient on the right-hand side is 
\begin{align*}
 & \sum_{\substack{A\subset[2r]_{\boldsymbol{c}}^{1},\ A\in\mathcal{T}(r)\\
B\subset[2r]_{\boldsymbol{c}}^{>1}
}
}(-1)^{\mathrm{eo}(A)+\#B}\frac{q^{N\kappa(A)}}{\qq{N}^{(1+\epsilon)\kappa(A)+\beta_{A}(B)}}\xi_{q,N}^{\epsilon}(\boldsymbol{c}_{A,B})\\
 & \quad=\xi_{q,N}^{\epsilon}(\boldsymbol{c})+\sum_{\emptyset\neq B\subset[2r]_{\boldsymbol{c}}^{>1}}(-1)^{\#B}\frac{1}{\qq{N}^{\beta(B)}}\xi_{q,N}^{\epsilon}(\boldsymbol{c}_{\emptyset,B})\\
 & \qquad+\sum_{\substack{\emptyset\neq A\subset[2r]_{\boldsymbol{c}}^{1},\ A\in\mathcal{T}(r)\\
B\subset[2r]_{\boldsymbol{c}}^{>1}
}
}(-1)^{\mathrm{eo}(A)+\#B}\frac{q^{N\kappa(A)}}{\qq{N}^{(1+\epsilon)\kappa(A)+\beta_{A}(B)}}\xi_{q,N}^{\epsilon}(\boldsymbol{c}_{A,B}).
\end{align*}
Therefore 
\begin{align*}
 & \xi_{q,N+1}^{\epsilon}(\boldsymbol{c})-\xi_{q,N}^{\epsilon}(\boldsymbol{c})\\
 & =-\sum_{\emptyset\neq B\subset[2r]_{\boldsymbol{c}}^{>1}}\frac{(-1)^{\#B}}{\qq{N}^{\alpha(B)}}\bigl(\xi_{q,N+1}^{\epsilon}(\boldsymbol{c}_{\emptyset,B})-\xi_{q,N}^{\epsilon}(\boldsymbol{c}_{\emptyset,B})\bigr)\\
 & \quad+\sum_{\emptyset\neq B\subset[2r]_{\boldsymbol{c}}^{>1}}(-1)^{\#B}\left(\frac{1}{\qq{N}^{\beta(B)}}-\frac{1}{\qq{N}^{\alpha(B)}}\right)\xi_{q,N}^{\epsilon}(\boldsymbol{c}_{\emptyset,B})\\
 & \quad+\sum_{\substack{\emptyset\neq A\subset[2r]_{\boldsymbol{c}}^{1},\ A\in\mathcal{T}(r)\\
B\subset[2r]_{\boldsymbol{c}}^{>1}
}
}(-1)^{\mathrm{eo}(A)+\#B}\frac{q^{N\kappa(A)}}{\qq{N}^{(1+\epsilon)\kappa(A)+\beta_{A}(B)}}\xi_{q,N}^{\epsilon}(\boldsymbol{c}_{A,B}).
\end{align*}
Since 
\[
\frac{1}{\qq{N}^{m}}=1+q^{N}\sum_{h=1}^{m}\frac{1}{\qq{N}^{h}},
\]
we have
\begin{align*}
& 
\sum_{\emptyset\neq B\subset[2r]_{\boldsymbol{c}}^{>1}}(-1)^{\#B}\left(\frac{1}{\qq{N}^{\beta(B)}}-\frac{1}{\qq{N}^{\alpha(B)}}\right)\xi_{q,N}^{\epsilon}(\boldsymbol{c}_{\emptyset,B})\\
& =\sum_{\emptyset\neq B\subset[2r]_{\boldsymbol{c}}^{>1}}(-1)^{\#B}\left(\sum_{h=1}^{\beta(B)}\frac{q^{N}}{\qq{N}^{h}}-\sum_{h=1}^{\alpha(B)}\frac{q^{N}}{\qq{N}^{h}}\right)\xi_{q,N}^{\epsilon}(\boldsymbol{c}_{\emptyset,B}).
\end{align*}
Also, since 
\[
\frac{q^{Nm}}{\qq{N}^{(1+\epsilon)m}}=\sum_{m'=1}^{m}(-1)^{m-m'}\binom{m-1}{m'-1}\frac{q^{N}}{\qq{N}^{m'+\epsilon m}},
\]
we obtain 
\begin{align*}
 & \sum_{\substack{\emptyset\neq A\subset[2r]_{\boldsymbol{c}}^{1},\ A\in\mathcal{T}(r)\\
B\subset[2r]_{\boldsymbol{c}}^{>1}
}
}(-1)^{\mathrm{eo}(A)+\#B}\frac{q^{N\kappa(A)}}{\qq{N}^{(1+\epsilon)\kappa(A)+\beta_{A}(B)}}\xi_{q,N}^{\epsilon}(\boldsymbol{c}_{A,B})\\
 & \qquad=\sum_{\substack{\emptyset\neq A\subset[2r]_{\boldsymbol{c}}^{1},\ A\in\mathcal{T}(r)\\
B\subset[2r]_{\boldsymbol{c}}^{>1}
}
}\sum_{h=1}^{\kappa(A)}(-1)^{\mathrm{eo}(A)+\#B+\kappa(A)-h}\binom{\kappa(A)-1}{h-1}\frac{q^{N}}{\qq{N}^{h+\epsilon\kappa(A)+\beta_{A}(B)}}\xi_{q,N}^{\epsilon}(\boldsymbol{c}_{A,B}).
\end{align*}
Hence, by the induction hypothesis, 
\begin{align*}
 & \xi_{q,N+1}^{\epsilon}(\boldsymbol{c})-\xi_{q,N}^{\epsilon}(\boldsymbol{c})\\
 & =-\sum_{\emptyset\neq B\subset[2r]_{\boldsymbol{c}}^{>1}}\frac{(-1)^{\#B}}{\qq{N}^{\alpha(B)}}\Delta_{N}Z_{q,N}^{\dagger}(\mathfrak{E}_{q}^{\epsilon}(\boldsymbol{c}_{\emptyset,B}))\\
 & \quad+\sum_{\emptyset\neq B\subset[2r]_{\boldsymbol{c}}^{>1}}(-1)^{\#B}\left(\sum_{h=1}^{\beta(B)}\frac{q^{N}}{\qq{N}^{h}}-\sum_{h=1}^{\alpha(B)}\frac{q^{N}}{\qq{N}^{h}}\right)Z_{q,N}^{\dagger}(\mathfrak{E}_{q}^{\epsilon}(\boldsymbol{c}_{\emptyset,B}))\\
 & \quad+\sum_{\substack{\emptyset\neq A\subset[2r]_{\boldsymbol{c}}^{1},\ A\in\mathcal{T}(r)\\
B\subset[2r]_{\boldsymbol{c}}^{>1}
}
}\sum_{h=1}^{\kappa(A)}(-1)^{\mathrm{eo}(A)+\#B+\kappa(A)-h}\binom{\kappa(A)-1}{h-1}\frac{q^{N}}{\qq{N}^{h+\epsilon\kappa(A)+\beta_{A}(B)}}Z_{q,N}^{\dagger}(\mathfrak{E}_{q}^{\epsilon}(\boldsymbol{c}_{A,B})),
\end{align*}
where 
\[
\Delta_{N}Z_{q,N}^{\dagger}(u):=Z_{q,N+1}^{\dagger}(u)-Z_{q,N}^{\dagger}(u).
\]
Using 
\[
\Delta_{N}Z_{q,N}^{\dagger}(uyx^{k-1})=\frac{q^{N}}{\qq{N}^{k}}Z_{q,N}^{\dagger}(u)\qquad\text{and}\qquad\Delta_{N}Z_{q,N}^{\dagger}(ux^{k})=\frac{1}{\qq{N}^{k}}\Delta_{N}Z_{q,N}^{\dagger}(u),
\]
we may rewrite the previous identity as 
\begin{align*}
& \xi_{q,N+1}^{\epsilon}(\boldsymbol{c})-\xi_{q,N}^{\epsilon}(\boldsymbol{c}) \\
& =-\sum_{\emptyset\neq B\subset[2r]_{\boldsymbol{c}}^{>1}}(-1)^{\#B}\Delta_{N}Z_{q,N}^{\dagger}(\mathfrak{E}_{q}^{\epsilon}(\boldsymbol{c}_{\emptyset,B})x^{\alpha(B)})\\
 & \quad+\sum_{\emptyset\neq B\subset[2r]_{\boldsymbol{c}}^{>1}}(-1)^{\#B}\left(\sum_{h=1}^{\beta(B)}\Delta_{N}Z_{q,N}^{\dagger}(\mathfrak{E}_{q}^{\epsilon}(\boldsymbol{c}_{\emptyset,B})yx^{h-1})-\sum_{h=1}^{\alpha(B)}\Delta_{N}Z_{q,N}^{\dagger}(\mathfrak{E}_{q}^{\epsilon}(\boldsymbol{c}_{\emptyset,B})yx^{h-1})\right)\\
 & \quad+\sum_{\substack{\emptyset\neq A\subset[2r]_{\boldsymbol{c}}^{1},\ A\in\mathcal{T}(r)\\
B\subset[2r]_{\boldsymbol{c}}^{>1}
}
}\sum_{h=1}^{\kappa(A)}(-1)^{\mathrm{eo}(A)+\#B+\kappa(A)-h}\binom{\kappa(A)-1}{h-1}\Delta_{N}Z_{q,N}^{\dagger}(\mathfrak{E}_{q}^{\epsilon}(\boldsymbol{c}_{A,B})yx^{h+\epsilon\kappa(A)+\beta_{A}(B)-1})\\
 & =\Delta_{N}Z_{q,N}^{\dagger}(\mathfrak{E}_{q}^{\epsilon}(\boldsymbol{c})).
\end{align*}
Thus $\xi_{q,N}^{\epsilon}(\boldsymbol{c})$ and $Z_{q,N}^{\dagger}(\mathfrak{E}_{q}^{\epsilon}(\boldsymbol{c}))$
have the same forward difference with respect to $N$. Since both vanish at $N=1$, they are equal for all $N$. 
\end{proof}

By taking the limit as $q\to 1$, we obtain the classical case of \cref{thm:main_finite}.
In this limit, all terms except the highest-weight part vanish, and
hence the resulting corollary is as follows.

\begin{corollary}
For $\boldsymbol{c}=(c_{1},\dots,c_{2r})\in\mathbb{Z}_{\ge1}^{2r}$,
define $\mathfrak{E}^{0}(\boldsymbol{c})\in\h^{1}$ and $\mathfrak{E}^{1}(\boldsymbol{c})\in\h^{\geq2}$
recursively by $\mathfrak{E}^{0}(\emptyset)=\mathfrak{E}^{1}(\emptyset)=1$
and, for $r\ge1$ and $\epsilon\in\{0,1\}$, 
\begin{align*}
\mathfrak{E}^{\epsilon}(\boldsymbol{c}) & =-\sum_{\substack{ B\subset[2r]_{\boldsymbol{c}}^{>1},\,\mathrm{eo}(B)=0\\
B\neq \emptyset
}
}(-1)^{\#B}\mathfrak{E}^{\epsilon}(\boldsymbol{c}_{\emptyset,B})x^{\#B}\\
 & \quad-\sum_{\substack{B\subset[2r]_{\boldsymbol{c}}^{>1},\,\mathrm{eo}(B)=0\\
\#B\geq2
}
}(-1)^{\#B}\mathfrak{E}^{\epsilon}(\boldsymbol{c}_{\emptyset,B})yx^{\#B-1}\\
 & \quad+\sum_{\substack{A\subset[2r]_{\boldsymbol{c}}^{1},\,A\in\mathcal{T}(r)\\
B\subset[2r]_{\boldsymbol{c}}^{>1},\,\mathrm{oe}(\sigma_A(B))=0\\
\#A+\#B\geq 2
}
}(-1)^{\mathrm{eo}(A)+\#B}\mathfrak{E}^{\epsilon}(\boldsymbol{c}_{A,B})yx^{(1+\epsilon)\kappa(A)+\#B-1}.
\end{align*}
Furthermore, define $\mathfrak{E}(\boldsymbol{c})\in\h^{1}$ and $\mathfrak{D}(\boldsymbol{c})\in\h^{\geq2}$
by 
\[
\mathfrak{E}(\boldsymbol{c}):=\mathfrak{E}^{0}(\boldsymbol{c}),\qquad\mathfrak{D}(\boldsymbol{c}):=\mathfrak{E}^{1}(\boldsymbol{c}).
\]
For $N\ge 1$, $r\ge0$ and $\boldsymbol{l},\boldsymbol{k}\in\mathbb{Z}_{\ge1}^{r}$,
define
\begin{align*}
\zeta_{N}\!\begin{pmatrix}\boldsymbol{l}\\
\boldsymbol{k}
\end{pmatrix} & :=\sum_{\substack{0<n_{1,1}\le\cdots\le n_{1,l_{1}}\\
<n_{2,1}\le\cdots\le n_{2,l_{2}}\\
\cdots\\
<n_{r,1}\le\cdots\le n_{r,l_{r}}<N
}
}\prod_{j=1}^{r}\frac{1}{(N-n_{j,1})\cdots(N-n_{j,l_{j}-1})n_{j,l_{j}}^{k_{j}}},\\
\zeta_{N}(\boldsymbol{k}) & :=\sum_{0<n_{1}<\cdots<n_{r}<N}\prod_{j=1}^{r}\frac{1}{n_{j}^{k_{j}}},\\
\zeta_{N}^{\ds}(\boldsymbol{k}) & :=\sum_{A\subset\{i\mid k_{i}=1\}}\sum_{\substack{0<n_{1}\le\cdots\le n_{r}<N\\
n_{i}<n_{i+1}\quad(i\notin A)
}
}\left(\prod_{i\in A}\frac{1}{N-n_{i}}\right)\left(\prod_{i\notin A}\frac{1}{n_{i}^{k_{i}}}\right).
\end{align*}
Define $Z_{N}:\h^{1}\to\mathbb{Q}$ by
\[
Z_{N}(yx^{k_{1}-1}\cdots yx^{k_{r}-1}):=\zeta_{N}(k_{1},\dots,k_{r}).
\]
Then, for $N\ge 1$, $r\ge0$ and $\boldsymbol{k}=(k_{1},\dots,k_{r})$, $\boldsymbol{l}=(l_{1},\dots,l_{r})\in\mathbb{Z}_{\ge1}^{r}$,
we have 
\begin{align*}
\zeta_{N}\!\begin{pmatrix}\boldsymbol{l}\\
\boldsymbol{k}
\end{pmatrix} & =Z_{N}\bigl(\mathfrak{E}(l_{1},k_{1},\dots,l_{r},k_{r})\bigr),\\
\zeta_{N}^{\ds}(\underbrace{1,\dots,1}_{l_{1}-1},k_{1}+1,\dots,\underbrace{1,\dots,1}_{l_{r}-1},k_{r}+1) & =Z_{N}\bigl(\mathfrak{D}(l_{1},k_{1},\dots,l_{r},k_{r})\bigr).
\end{align*}
\end{corollary}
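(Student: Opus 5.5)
The plan is to obtain the corollary from the theorem just proved, $\xi_{q,N}^{\epsilon}(\boldsymbol{c})=Z_{q,N}^{\dagger}(\mathfrak{E}_{q}^{\epsilon}(\boldsymbol{c}))$, by multiplying both sides by $(1-q)^{w}$ for a suitable weight $w=w_\epsilon(\boldsymbol{c})$ and letting $q\to1$. Grade $\h$ by giving both $x$ and $y$ degree $1$, so that the word $yx^{k_1-1}\cdots yx^{k_r-1}$ has degree $k_1+\cdots+k_r$. Set $w_0(\boldsymbol{c}):=\sum_j(c_j-1)+r$ and $w_1(\boldsymbol{c}):=\sum_j c_j$, writing $\boldsymbol{c}=(l_1,k_1,\dots,l_r,k_r)$. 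These are the weights $\wt(\boldsymbol{k},\boldsymbol{l})$ and $\wt(\{1\}^{l_1-1},k_1+1,\dots)$ of the two finite models.

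\emph{Step 1: the left-hand sides.} I will check termwise, as in the remark of the introduction, that
\[
\lim_{q\to1}(1-q)^{w_0}\xi^0_{q,N}(\boldsymbol{c})=\zeta_N\binom{\boldsymbol{l}}{\boldsymbol{k}},
\qquad
\lim_{q\to1}(1-q)^{w_1}\xi^1_{q,N}(\boldsymbol{c})=\zeta^{\ds}_N(\{1\}^{l_1-1},k_1+1,\dots).
\]
This is a finite sum, and each factor satisfies $(1-q)/\qq{m}\to 1/m$. For $\epsilon=1$ one uses that the entry $k_i=1$ with $i\in A$ contributes $1/\qq{N-n_i}$ with a single power of $(1-q)$. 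So no sign or $q\mapsto q^{-1}$ issue arises, because the $\ds,\dagger$ model already has the right shape.

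\emph{Step 2: the right-hand sides.} By the computation in the proof of \cref{lem:independence}, for $u\in\h^1$ of degree $\le K$ we have $\lim_{q\to1}(1-q)^K Z^{\dagger}_{q,N}(u)=Z_N(u_K)$, where $u_K$ is the degree-$K$ part. It therefore suffices to prove the following claim by induction on $\sum c_i$: $\mathfrak{E}_q^{\epsilon}(\boldsymbol{c})$ has degree $\le w_\epsilon(\boldsymbol{c})$, and its degree-$w_\epsilon(\boldsymbol{c})$ component is $\mathfrak{E}^{\epsilon}(\boldsymbol{c})$. Given this, $\mathfrak{E}=\mathfrak{E}^0$ and $\mathfrak{D}=\mathfrak{E}^1$ yield the two stated identities.

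\emph{Step 3: degree bookkeeping in the recursion of \cref{def:Eq}; this is the main work.} For $B\subset[2r]^{>1}_{\boldsymbol{c}}$ one has $w_\epsilon(\boldsymbol{c}_{\emptyset,B})=w_\epsilon(\boldsymbol{c})-\#B$. Hence the three families of terms behave as follows.
\begin{itemize}
\item[(i)] The first family has degree $w-\#B+\alpha(B)$. It is top exactly when $\mathrm{eo}(B)=0$, and then gives $x^{\#B}$.
\item[(ii)] The second family reaches top degree only through $h=\max(\alpha(B),\beta(B))=\#B$. Its top part is $\bigl([\mathrm{oe}(B)=0]-[\mathrm{eo}(B)=0]\bigr)(-1)^{\#B}\mathfrak{E}^{\epsilon}(\boldsymbol{c}_{\emptyset,B})yx^{\#B-1}$; here the case $\alpha=\beta=\#B$ contributes nothing, and the convention for $\alpha>\beta$ supplies the minus sign.
\item[(iii)] In the third family, removing a domino of two entries equal to $1$ lowers $w_\epsilon$ by $1-\epsilon$ and removes one generator, so the word $yx^{h+\epsilon\kappa(A)+\beta_A(B)-1}$ is top exactly when $h=\kappa(A)$ and $\beta_A(B)=\#B$, i.e.\ $\mathrm{oe}(\sigma_A(B))=0$. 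In that case the binomial coefficient is $1$ and the sign is $(-1)^{\mathrm{eo}(A)+\#B}$.
\end{itemize}
The $+[\mathrm{oe}(B)=0]$ part of (ii) is exactly the $A=\emptyset$, $\#B\ge2$ piece of the third classical sum. The $-[\mathrm{eo}(B)=0]$ part is the second classical sum. The $\#B=1$ contributions of (ii) cancel, since then $\mathrm{eo}=\mathrm{oe}=0$, which matches the restriction $\#A+\#B\ge2$. Together with the induction hypothesis, this identifies the top part with the classical recursion.

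I expect the delicate point to be verifying exactly the weight drop $w_\epsilon(\boldsymbol{c}_{A,B})$ for $A\in\mathcal{T}(r)$: dominoes of the even-odd and odd-even types merge or delete index pairs differently. I will check this directly on the pair structure $(l_j,k_j)$, treating both domino types and $\epsilon=0,1$ separately.
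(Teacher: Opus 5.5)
Your proposal is correct and takes the paper's route: multiply the finite identity $\xi^{\epsilon}_{q,N}(\boldsymbol{c})=Z^{\dagger}_{q,N}(\mathfrak{E}^{\epsilon}_q(\boldsymbol{c}))$ by $(1-q)^{w_\epsilon}$, let $q\to1$, and keep only the top-weight part of the recursion for $\mathfrak{E}^{\epsilon}_q$. You also spell out the bookkeeping the paper leaves implicit, namely that the top part of (i)--(iii) reproduces the classical recursion, including the cancellation of the $\#B=1$ terms and the absorption of the $A=\emptyset$, $\mathrm{oe}(B)=0$ terms.

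One slip needs fixing. Deleting a domino of two entries equal to $1$ lowers $w_\epsilon$ by $1+\epsilon$, not $1-\epsilon$, so
\[
w_\epsilon(\boldsymbol{c}_{A,B})=w_\epsilon(\boldsymbol{c})-\#B-(1+\epsilon)\kappa(A).
\]
This is exactly what makes the term attain degree $w_\epsilon(\boldsymbol{c})$ precisely when $h=\kappa(A)$ and $\beta_A(B)=\#B$. The point you flagged as delicate is not: $w_0=\sum_j c_j-r$ and $w_1=\sum_j c_j$ depend only on the entry sum and the length, so the domino type plays no role in the weight drop.
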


\end{document}